\newcommand{\R}{\mathbb{R}}
\newcommand{\T}{\mathbb{T}}
\newcommand{\Z}{\mathbb{Z}}
\newcommand{\N}{\mathbb{N}}
\newcommand{\K}{\mathcal{K}}
\newcommand{\diff}{\mathrm{d}}
\newtheorem{theorem}{Theorem}[section]
\newtheorem{lemma}[theorem]{Lemma}
\newtheorem{proposition}[theorem]{Proposition}
\newtheorem*{main-theorem}{Main Theorem}
\newtheorem*{remark*}{Remark}
\newtheorem*{lemma*}{Lemma A.1}
\numberwithin{equation}{section}
\begin{document}

\title[Conditional well-posedness in the bidirectional Whitham]{A conditional well-posedness result for the bidirectional Whitham equation}

\author{Mats Ehrnstr\"om}
\address{Department of Mathematical Sciences, NTNU Norwegian University of Science and Technology, 7491 Trondheim, Norway.}
\email{mats.ehrnstrom@ntnu.no}

\author{Long Pei}
\address{Department of Mathematics, KTH Royal Institute of Technology, 10044  Stockholm, Sweden}
\email{longp@kth.se}

\author{Yuexun Wang}
\address{Department of Mathematical Sciences, Norwegian University of Science and Technology, 7491 Trondheim, Norway.}
\email{yuexun.wang@ntnu.no}

\thanks{M.E. and Y.W. acknowledge the support by grants nos. 231668 and 250070 from the Research Council of Norway.}

\subjclass[2010]{76B15; 76B03, 35S30, 35A20}
\keywords{Whitham-type equations, dispersive equations, well-posedness}

\begin{abstract}
We consider the initial-value problem for the bidirectional Whitham equation, a system which combines the full two-way dispersion relation from the incompressible Euler equations with a canonical shallow-water nonlinearity. We prove local well-posedness in classical Sobolev spaces in the localised as well as the periodic case, using a square-root type transformation to symmetrise the system. The existence theory requires a non-vanishing surface elevation, indicating that the problem is ill-posed for more general initial data.
\end{abstract}

\maketitle

\section{Introduction and main results}

We consider the  bidirectional Whitham equation
\begin{equation}\label{eq:bdw}
\begin{aligned}
\partial_t\eta &=-\mathcal{K}\partial_xu-\partial_x(\eta u)\\
\partial_tu &=-\partial_x\eta-u\partial_xu,
\end{aligned}
\end{equation}
formally derived in \cite{aceves2013numerical,moldabayev2015whitham} from the incompressible Euler equations to model fully dispersive shallow water waves whose propagation is allowed to be both left- and rightward. Here,  $\eta$ denotes the surface elevation, \(u\) is the rightward velocity at the surface, and the Fourier multiplier operator $\mathcal{K}$ is  defined by
\begin{equation}\label{eq:K}
\widehat {{\mathcal{K}v}}(\xi)=\frac{\tanh(\xi)}{\xi} \, \widehat{v}(\xi),
\end{equation}
for all \(v\) in the Schwartz space \(\mathcal{S}(\mathbb{R})\). By duality, the operator \(\K\) is well-defined on the space of tempered distributions, \(\mathcal{S}'(\R)\).  The model \eqref{eq:bdw} is the two-way equivalent of the Whitham equation 
\begin{equation}\label{eq:whitham}
u_t + \K^{\frac{1}{2}} u_x + u u_x = 0, 
\end{equation}
a nonlocal shallow water equation that in its simple form still captures several interesting mathematical features that are present also in the full water-wave problem. The operator \(\K^{\frac{1}{2}}\) is the square root of the operator \(\K\) defined in \eqref{eq:K}, most easily defined by considering the action of these operators in Fourier space. The features of \eqref{eq:whitham} include solitary \cite{EGW12} and heighest \cite{EWhighest} waves, finite-time breaking \cite{H15} and modulational instability \cite{HJ15}. 

Two-way fully dispersive systems related to the uni- and bidirectional Whitham equations have been considered for example in \cite{hur2016wave} and \cite{saut2012well}. The kind of full system one would ultimately like to handle is something akin to (1.1) in \cite{saut2012well} (the same system appears in \cite{lannes2013water}) The other, simpler, systems may be viewed both as steps in this direction and as models in their own right. In case of the bidirectional Whitham equation \eqref{eq:bdw}, it is mathematically interesting because of its weak dispersion, and contains a logarithmically cusped wave of greatest height \cite{ehrnstrom2016existence}. Experiments indicate surprisingly good modelling properties for this model, as well as for several other 'Whitham-like' equations and systems; they significantly outperform the KdV equation in the experimental setting, see \cite{Carter2017} and \cite{MR3523508}. Still, we regard our result as a mathematical one: the system \eqref{eq:bdw} is well-posed, but the set of initial-data for which we can control the life-span is bounded away from a zero surface deflection.\footnote{It is an interesting question how this aligns with the experimental data in \cite{Carter2017}, apparently not displaying this shortcoming. One possibility is that classical Sobolev spaces are too large for the system \eqref{eq:bdw}. We do not provide an answer, but want to make the reader aware of these facts.}
A similar observation, without proof, has been made very recently in \cite{klein2017whitham}, an investigation written in parallel to our paper and that establishes the validity of the Whitham equation as a water-wave model, in the KdV regime. Although we use standard energy methods, we hope the reader will be convinced that there are some details to be made. 

The weak dispersion of \eqref{eq:bdw} clearly suggests to view it as a perturbation of a hyperbolic system. One could symmetrise the system in many ways, for example by using matrices with diagonals \((1,\eta)\) or \((1/\eta, 1)\). In this paper, we adopt the transformation \(\eta\mapsto\sqrt{\eta}\), sometimes used in  physical settings as a sound speed transformation and in the blow-up analysis in fluid mechanics (cf. \cite{dafermos2010hyperbolic}), to transfer the system \eqref{eq:bdw} into a canonical  form. Although this does not provide any better results for smooth data, such a transformation may be of benefit when regarding weaker data. The differences in the analysis between the three different symmetrisations are minimal, and in all three cases  it should be emphasised that one needs a positive lower bound for $\eta$ to ensure the symmetriser is positive definite or to remove the singularity in the canonical form when using the transformation with the square root. For the same reason the standard argument in \cite{kato1975cauchy, majda2012compressible} does not directly apply because the matrices in the hyperbolic term are only in homogeneous Sobolev spaces in the presence of a positive background.  Finally, we view our result within a broader framework, a program to investigate the interplay between dispersive and nonlinear effects in nonlocal equations, and aim to continue to investigate what solutions and properties similar equations allow for.

To state the results, let $\mathbb{X}$ be either the real line \(\R\) or the torus \(\T\) of circumference \(2\pi\), and let \(N \geq 2\) be an integer. Our main result is then as follows.

\begin{theorem}\label{thm:main} 
Let $(\eta_0,u_0)$ be initial data such that \(\inf \eta_0 > 0\) and
\begin{equation}\label{3}
(\sqrt{\eta_0}-\sqrt{\bar{\eta}},u_0){\color{purple}}\in H^N(\mathbb{X})
\end{equation}
for some positive constant \(\bar{\eta}\). Then the equation \eqref{eq:bdw} is locally well-posed. There exist a positive time $T > 0$ and a classical solution $(\eta,u)^{tr}$ of \eqref{eq:bdw} with  \((\eta,u)|_{t=0} = (\eta_0, u_0)\) that is unique among solutions satisfying
\begin{equation*}
  (\sqrt{\eta}-\sqrt{\bar{\eta}},u)\in C([0,T];H^N(\mathbb{X}))\cap C^1([0,T];H^{N-1}(\mathbb{X})).
\end{equation*}
The solution depends continuously on  $(\eta_0,u_0)$ with respect to the same metric.
\end{theorem}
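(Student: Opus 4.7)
The plan is to introduce the variable $\zeta := \sqrt{\eta}$, so that the assumption $\inf\eta_0>0$ yields a strictly positive $\zeta_0$, and to thereby cast \eqref{eq:bdw} as a quasilinear symmetric hyperbolic system perturbed by a bounded nonlocal operator. A direct calculation, dividing the first equation by $2\zeta$, gives
\begin{align*}
  \partial_t \zeta + u\,\partial_x \zeta + \tfrac{\zeta}{2}\,\partial_x u &= -\tfrac{1}{2\zeta}\K \partial_x u,\\
  \partial_t u + 2\zeta\,\partial_x \zeta + u\,\partial_x u &= 0.
\end{align*}
Multiplying the first equation by $4$, the principal part becomes the symmetric matrix $\begin{pmatrix} 4u & 2\zeta\\ 2\zeta & u\end{pmatrix}$, while the right-hand side is of order zero since the symbol $\I \tanh(\xi)$ of $\K \partial_x$ is uniformly bounded. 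Writing $\bar\zeta := \sqrt{\bar\eta}$, the problem is then naturally posed for the perturbation $(\zeta - \bar\zeta, u) \in H^N(\mathbb{X})$.

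For existence, I would set up a Friedrichs-mollifier scheme: for each $\varepsilon>0$, replace the nonlinear terms by their $J_\varepsilon$-mollifications, so that the system reduces to an ODE in $H^N(\mathbb{X})$ with local-in-time solutions $(\zeta^\varepsilon,u^\varepsilon)$. The key task is to derive $\varepsilon$-uniform $H^N$-estimates: apply $\partial_x^k$ for $0\leq k \leq N$ to the system, pair with the symmetriser $\mathrm{diag}(4,1)$, integrate by parts, and absorb commutators through Kato-Ponce and Moser-type estimates. The $L^2$-boundedness of $\K \partial_x$, together with Moser estimates applied to $1/\zeta - 1/\bar\zeta$, controls the nonlocal contribution. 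A continuity argument, combining these bounds with the preservation of the lower bound $\zeta^\varepsilon \geq \tfrac12\sqrt{\inf \eta_0}$ on a short interval $[0,T]$, produces a life-span independent of $\varepsilon$; standard compactness (Aubin-Lions) then passes the limit $\varepsilon\to 0$ and delivers a classical solution of \eqref{eq:bdw}.

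Uniqueness follows from the same symmetric energy applied to the difference of two solutions at the $L^2$-level, using $H^{N-1}\hookrightarrow L^\infty$ (valid as $N\geq 2$) to control the coefficients and closing with Grönwall's inequality. For continuous dependence I would invoke a Bona-Smith argument: regularise the initial data at scale $\delta$, exploit the uniform $H^N$-bound together with the Lipschitz dependence at the $L^2$- (or $H^{N-1}$-) level, and interpolate to recover convergence in $C([0,T];H^N(\mathbb{X}))$. The scheme is identical in the periodic and localised cases, since Kato-Ponce and Sobolev embeddings are available on both $\R$ and $\T$. The principal obstacle is that the factor $1/\zeta$ forces all constants to depend on $\inf\eta$, so that the life-span and uniform bounds deteriorate as $\inf\eta_0 \searrow 0$; this is the mechanism behind the conditional nature of the theorem and cannot be circumvented within this symmetrisation. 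A more technical point is that $\zeta$ and $u$ themselves are not $L^2$ functions on $\R$, so the matrix of coefficients must be split as $A(\zeta,u) = A(\bar\zeta,0)+\bigl(A(\zeta,u)-A(\bar\zeta,0)\bigr)$ in every energy identity, with the constant-coefficient part treated by direct Fourier analysis.
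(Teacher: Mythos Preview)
Your proposal is correct and follows the same overall strategy as the paper: recast \eqref{eq:bdw} via a square-root change of variable as a symmetric hyperbolic system with a bounded nonlocal perturbation, obtain uniform $H^N$ energy estimates for a mollified approximation, and pass to the limit, with uniqueness and continuous dependence handled by $L^2$ energy estimates and a Bona--Smith argument. You also correctly identify the two genuine technical issues---the dependence of all constants on a lower bound for $\zeta$, and the need to split off the constant background $A(\bar\zeta,0)$ in every estimate because the coefficients are not in $L^2(\R)$.

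There are two implementation differences worth noting. First, the paper defines $\zeta = 2(\sqrt{\eta}-\sqrt{\bar\eta})$, which makes the matrix $A(U)$ directly symmetric without an external symmetriser, whereas you keep $\zeta=\sqrt\eta$ and introduce $\mathrm{diag}(4,1)$; these are equivalent. Second, and more substantially, the paper does not mollify the nonlinear problem and invoke Aubin--Lions compactness as you propose. Instead it first solves a \emph{linearised} mollified problem, then removes the mollifier by showing the family is Cauchy in $C([0,T];L^2)$ and interpolating, and finally runs a Picard-type iteration $U_m\mapsto U_{m+1}$ on the linearised equation to reach the nonlinear solution. Your direct route is slightly shorter and avoids the iteration, but the paper's Cauchy-sequence argument gives strong convergence in $C_tL^2$ immediately and makes the passage to $C([0,T];H^N)$ (via weak continuity plus norm continuity and time reversibility) more explicit; with Aubin--Lions alone you would still need that extra step, which your Bona--Smith invocation implicitly covers.
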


It should be noted that in the statement of Theorem~\ref{thm:main} the constant \(\bar{\eta}\) is fixed, whence the metric is fixed, too.  The proof of Theorem \ref{thm:main} is presented throughout Sections~\ref{sec:prel}--\ref{proof of main theorem}, and we comment on the periodic case in a separate appendix. Section~\ref{sec:prel} contains the statement and reformulation of the problem, as well as necessary preliminaries. In Section~\ref{sec:regularised} we obtain a short-time existence result for the linearised and regularised problem.  The Gagliardo--Nirenberg interpolation inequality and tame product estimates (see \cite{hamilton1982inverse})  taylored to the non-vanishing background are used to show that the space-dependent part of the problem defines a bounded map on $H^N(\mathbb{R})$, thereby reducing our problem to an ODE in this space. Section 4 includes the main estimates of this letter, used to obtain convergence from the regularised linear problem to the purely linear one. The good properties of mollifiers $\mathcal{J}_\varepsilon$ are then deployed to get uniform estimates, and to treat the inhomogeneous terms in the equation. This plays an important role in finding a Cauchy sequence in $C({[0,T_2]; L^2 (\mathbb{R})})$, converging to a solution of the problem (in a better regularity class).  We mention that the same techniques may be applied to any of the symmetrisations mentioned in this paper to overcome the obstacle of the positive background introduced via \(\eta\).

\section{Preliminaries and setup of the problem}\label{sec:prel}
With \(\mathbb{X} \in \{\R,\T\}\) as above, let \(L^p(\mathbb{X})\), \(p \in [1,\infty]\), be the standard Lebesgue spaces with inner product 
\[
(f,g)_2=\int_{\mathbb{X}}fg \, \diff x
\]
in the case \(p=2\). Similarly, let $H^s(\mathbb{X}) = (1-\partial_x^2)^{-s/2} L^2(\mathbb{X})$ be the Bessel-potential spaces with norm 
\[
\|\cdot\|_{H^s(\mathbb{X})} = \|(1-\partial_x^2)^{s/2} \cdot \|_{L^2(\mathbb X)}, \qquad s \in \R,
\]
and for any Banach space \(\mathbb Y\), let $C^k([0,T];\mathbb{Y})$ be the space of all bounded continuous functions $u\colon [0,T]\rightarrow \mathbb{Y}$ with bounded and continuous derivatives up to \(k\)th order, normed by
\[
\| f \|_{C([0,T];{\mathbb Y})} =  \sum_{j=0}^k \sup_{t \in [0,T]} \|\partial_t^j f(t,\cdot)\|_{\mathbb Y}. 
\]
We write $f\lesssim g$ when $f\leq   cg$ for some constant \(c > 0\), and \(f \eqsim g\) when \(f \lesssim g \lesssim f\).  Finally, for a given positive constant \(\bar\eta\) and any function \(\eta\), let
\[
 \bar\lambda=\lambda(\bar{\eta}) \quad\text{ and }\quad \zeta=2(\lambda(\eta)-\bar\lambda),
\]
where \(\lambda=\sqrt{\cdot}\) is a shorthand to ease notation.
Then \eqref{eq:bdw} may be expressed as 
\begin{align*}
\partial_t\zeta+u\partial_x\zeta+\frac{\zeta+2\bar\lambda}{2}\partial_xu+\frac{2}{\zeta+2\bar\lambda}\mathcal{K}\partial_xu &=0,\\
\partial_tu+u\partial_xu+\frac{\zeta+2\bar\lambda}{2}\partial_x\zeta &=0,
\end{align*}
or, with
\[
U = 
\begin{pmatrix}
\zeta \\
u
\end{pmatrix}, \quad A(U)=
\begin{pmatrix}
u & \frac{\zeta+2\bar\lambda}{2} \\
\frac{\zeta+2\bar\lambda}{2} & u
\end{pmatrix} \quad\text{and} \quad B(U)=
\begin{pmatrix}
0 & \frac{2}{\zeta+2\bar\lambda} \\
0 & 0
\end{pmatrix},
\]
as
\begin{eqnarray}\label{5}
\partial_tU+A(U)\partial_xU+B(U)\mathcal{K}\partial_xU=0.
\end{eqnarray}
The system \eqref{5} is hyperbolic with a nonlocal dispersive perturbation and we shall look for solutions in Sobolev spaces embedded into \(L^\infty(\R)\). One notes that the initial data \(\zeta_0=2(\lambda(\eta_0)-\bar\lambda)\) satisfies \(\zeta_0 + 2\bar\lambda \geq 2 \sqrt{\inf \eta_0} > 0\) and may thus pick a positive constant \(\mu\) such that \(\bar \lambda \leq \mu^{-1}\) and
\begin{equation}\label{eq:assume u}
2\mu \leq \zeta_0 + 2\bar\lambda \leq (2\mu)^{-1},
\end{equation}
that we will use below.  The initial data \(U(0,x)\) for our problem shall be denoted by
\begin{equation}\label{6}
U_0 = (\zeta_0,u_0)^{tr},
\end{equation}
where \emph{tr} denotes the transpose of a matrix. Finally, let $U^{(k)}=(\partial_x^k\zeta,\partial_x^ku)^{tr}$, and define the partial and total energy functionals as
\begin{eqnarray*}
	E^{(k)}(t,U) =\|U^{(k)}(t,\cdot)\|_{L^2(\mathbb{X})}^2=\|\zeta^{(k)}(t,\cdot)\|_{L^2(\mathbb{X})}^2
	+\|u^{(k)}(t,\cdot)\|_{L^2(\mathbb{X})}^2
\end{eqnarray*}
and
\begin{eqnarray*}
	E_N(t,U) =\sum_{k=0}^NE^{(k)}(t,U),
\end{eqnarray*}
respectively. We will always assume that the integer $N\geq2$.  We shall sometimes write simply $E_N(t)$,  and similarly $E_N(U_0)$ will mean $E_N(0,U)$.

\section{The regularised and linearised problem}\label{sec:regularised}
For $0 < \varepsilon \ll 1$, let $\mathcal{J}_\varepsilon$ be a standard mollifier based on some smooth and compactly supported function $\varrho$ on $\R$. Denote by $\N_{0}$ the set of non-negative integers. We consider first the regularised problem
\begin{equation}\label{eq:regularised}
\partial_tU_\varepsilon+\mathcal{J}_\varepsilon [\mathcal{J}_\varepsilon(A(V))\partial_x(\mathcal{J}_\varepsilon U_\varepsilon)]+\mathcal{J}_\varepsilon[\mathcal{J}_\varepsilon(B(V))\mathcal{K}\partial_x(\mathcal{J}_\varepsilon U_\varepsilon)]=0,
\end{equation}
with initial data \(U_\varepsilon(0,x)=U_0(x)\).  Here, for any positive number $T_1$, it is assumed that 
\[
V=(\varphi,v)^{tr}\in C([0,T_1];H^N(\mathbb{R}))\cap C^1([0,T_1];H^{N-1}(\mathbb{R}))
\] 
satisfies
\begin{equation}\label{eq:assume v}
\begin{aligned}
  E_N(t,V) &\leq 2E_N(U_0),\\ 
  \mu \leq\varphi+2\bar\lambda &\leq \mu^{-1},
\end{aligned}  
\end{equation}
for all \((t,x) \in  [0,T_1]\times \mathbb{R}\). We will make repeated use of the following estimates.

\begin{lemma}\label{lemma:mollification} Mollification is continuous \(L^\infty \to BUC\), and for $k, l \in \N_0$,
\begin{align*}
\|\mathcal{J}_\varepsilon f\|_{H^{k+l}(\mathbb{R})} &\lesssim \varepsilon^{-l}\|f\|_{H^k(\mathbb{R})},\\
\|(\mathcal{J}_\varepsilon-\mathcal{J}_{\varepsilon'})f\|_{H^{k}(\mathbb{R})} &\lesssim |\varepsilon-\varepsilon'| \|\partial_xf\|_{H^{k}(\mathbb{R})}.
\end{align*}
\end{lemma}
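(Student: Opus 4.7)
The plan is to move everything to Fourier space, where mollification becomes multiplication by $\widehat{\varrho}(\varepsilon\xi)$. Since $\varrho$ is smooth and compactly supported, $\widehat{\varrho}$ is Schwartz; in particular $\widehat{\varrho}$ and each of its derivatives are bounded on $\R$, and $(1+|\eta|^2)^m |\widehat{\varrho}(\eta)|^2$ is bounded for every $m \in \N_0$.

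For the continuity $L^\infty \to BUC$, I would argue directly in physical space. Young's inequality gives $\|\mathcal{J}_\varepsilon f\|_{L^\infty} \leq \|\varrho_\varepsilon\|_{L^1}\|f\|_{L^\infty}$, while
\[
|\mathcal{J}_\varepsilon f(x+h) - \mathcal{J}_\varepsilon f(x)| \leq \|f\|_{L^\infty}\,\|\varrho_\varepsilon(\cdot+h) - \varrho_\varepsilon\|_{L^1},
\]
uniformly in $x$, and the right-hand side tends to $0$ as $h\to0$ by continuity of translation in $L^1$. This yields bounded uniform continuity.

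For the first Sobolev estimate, Plancherel gives
\[
\|\mathcal{J}_\varepsilon f\|_{H^{k+l}(\R)}^2 = \int_{\R} (1+|\xi|^2)^{k+l}|\widehat{\varrho}(\varepsilon\xi)|^2 |\widehat{f}(\xi)|^2\,d\xi.
\]
The trick is the crude but sharp bound $(1+|\xi|^2)^{l} \leq \varepsilon^{-2l}(1+\varepsilon^2|\xi|^2)^{l}$, valid for $0<\varepsilon\leq1$, after which the Schwartz decay of $\widehat{\varrho}$ absorbs the factor $(1+\varepsilon^2|\xi|^2)^{l}$ into a constant depending only on $l$ and $\varrho$; what remains is precisely $\|f\|_{H^k}^2$ times $\varepsilon^{-2l}$.

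For the second estimate, the analogous Plancherel identity reads
\[
\|(\mathcal{J}_\varepsilon-\mathcal{J}_{\varepsilon'})f\|_{H^k(\R)}^2 = \int_{\R}(1+|\xi|^2)^k|\widehat{\varrho}(\varepsilon\xi)-\widehat{\varrho}(\varepsilon'\xi)|^2 |\widehat{f}(\xi)|^2\,d\xi,
\]
and the mean value theorem together with the boundedness of $\widehat{\varrho}'$ yields
\[
|\widehat{\varrho}(\varepsilon\xi)-\widehat{\varrho}(\varepsilon'\xi)| \leq \|\widehat{\varrho}'\|_{L^\infty}|\varepsilon-\varepsilon'|\,|\xi|.
\]
Substituting and recognising $\int_{\R}(1+|\xi|^2)^k|\xi|^2|\widehat{f}(\xi)|^2\,d\xi = \|\partial_x f\|_{H^k}^2$ finishes the proof. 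I do not expect any genuine obstacle here; the only subtle point is keeping track of how factors of $\varepsilon$ redistribute in the first estimate, which is precisely where the rescaling $(1+|\xi|^2)^l \lesssim \varepsilon^{-2l}(1+\varepsilon^2|\xi|^2)^l$ does the work.
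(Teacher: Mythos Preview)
Your argument is correct. The paper, by contrast, treats the three parts quite unevenly: it dispatches the $L^\infty\to BUC$ continuity and the $H^k\to H^{k+l}$ estimate by citation, and gives a detailed proof only of the third estimate---but there it works in \emph{physical} space rather than Fourier space. Concretely, the paper applies the mean value theorem to the map $\varepsilon\mapsto \varrho_\varepsilon * f(x)$ to write
\[
(\mathcal{J}_\varepsilon-\mathcal{J}_{\varepsilon'})f(x) = (\varepsilon'-\varepsilon)\int_{\R} z\,\varrho(z)\,\partial_x f\bigl(x-s\varepsilon z-(1-s)\varepsilon' z\bigr)\,dz
\]
for some $s\in(0,1)$, and then uses Minkowski's inequality and translation-invariance of the $L^2$ norm to extract $|\varepsilon-\varepsilon'|\|\partial_x^{l+1}f\|_{L^2}$. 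Your Fourier-side route via the mean value theorem on $\widehat{\varrho}$ is equally short and arguably more transparent; it also has the virtue of handling all three claims with one uniform technique. Amusingly, the paper itself adopts exactly your Fourier-side argument when reproving the lemma in the periodic setting (Lemma~A.1 in the appendix), so your approach is not foreign to the authors---they simply chose the physical-space variant on the line.
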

\begin{proof}
The \(L^\infty \to BUC\)-continuity is a consequence of Young's inequality \cite{tartar2007introduction}, whereas the \(H^{k+l} \to H^k\)-estimate can be found for example in \cite{bona1975initial,kato1987nonstationary}. The last estimate is a small twist (to annihilate the constant in \(\varphi + 2 \bar\lambda\)) on the standard estimate \(\|\mathcal{J}_\varepsilon f-f\|_{H^{k}(\mathbb{R})}\lesssim \varepsilon \|f\|_{H^{k+1}(\mathbb{R})}\) , which may be proved in the following way: there exists \(s \in (0,1)\) such that
\[
(\mathcal{J}_\varepsilon-\mathcal{J}_{\varepsilon'})f(x) = (\varepsilon'-\varepsilon)\int_{\mathbb{R}}z\varrho(z)\partial_xf(x-s\varepsilon z-(1-s)\varepsilon' z)\, \diff z.
\]
Therefore, 
\begin{align*}
	&\|\partial_x^l(\mathcal{J}_\varepsilon-\mathcal{J}_{\varepsilon'})f\|_{L^2(\mathbb{R})}\\
	& \leq|\varepsilon-\varepsilon'|\int_{\mathbb{R}}|z|\varrho(z) \left(\int_{\mathbb{R}}|\partial_x^{l+1}f(x-s\varepsilon z-(1-s)\varepsilon'z)|^2
	\, \diff x \right)^{\frac{1}{2}} \,\diff z\\
	& \leq|\varepsilon-\varepsilon'| \|\partial_x^{l+1}f\|_{L^2(\mathbb{R})}\int_{\mathbb{R}}|z|\varrho(z) \,\diff z, 
\end{align*}
from which the estimate follows.
\end{proof}

%

\begin{proposition}  For any $0 < \varepsilon \ll 1$, \(N \geq 2\) and \(T_1 > 0\) as in \eqref{eq:assume v} the regularised problem \eqref{eq:regularised} has a unique  solution $U_\varepsilon\in C^1([0,T_1];H^N(\mathbb{R}))$.
\end{proposition}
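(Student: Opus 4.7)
The plan is to recast \eqref{eq:regularised} as a linear non-autonomous ODE in the Banach space $H^N(\R)$ and invoke Picard--Lindel\"of. Setting
\[
L_\varepsilon(t)W := -\mathcal{J}_\varepsilon\bigl[\mathcal{J}_\varepsilon A(V(t))\,\partial_x(\mathcal{J}_\varepsilon W)\bigr] - \mathcal{J}_\varepsilon\bigl[\mathcal{J}_\varepsilon B(V(t))\,\mathcal{K}\partial_x(\mathcal{J}_\varepsilon W)\bigr],
\]
the regularised problem becomes $\partial_t U_\varepsilon = L_\varepsilon(t) U_\varepsilon$ with $U_\varepsilon(0) = U_0$. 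The key task is to show that $L_\varepsilon(t) \in \mathcal{L}(H^N(\R))$ with norm bounded uniformly for $t \in [0,T_1]$ and that $t \mapsto L_\varepsilon(t)$ is continuous into $\mathcal{L}(H^N(\R))$; both properties together imply the existence of a unique $U_\varepsilon \in C^1([0,T_1];H^N(\R))$ by the standard theory of linear ODE in Banach spaces.

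For the uniform bound I would use the mollifier estimate from Lemma~\ref{lemma:mollification} to absorb the loss of one derivative in $\partial_x$ into a factor $\varepsilon^{-1}$, giving $\|\partial_x \mathcal{J}_\varepsilon W\|_{H^N} \lesssim \varepsilon^{-1} \|W\|_{H^N}$. The Fourier multiplier $\mathcal{K}$ is bounded on $H^N$ since its symbol $\tanh(\xi)/\xi$ lies in $L^\infty(\R)$, so the same estimate applies to $\mathcal{K}\partial_x\mathcal{J}_\varepsilon W$. Multiplication by $\mathcal{J}_\varepsilon A(V(t))$ preserves $H^N$ via a tame product estimate: writing $A(V) = A(0) + \tilde{A}(V)$ splits the matrix into a constant part (fixed by $\mathcal{J}_\varepsilon$, hence contributing only at the $L^\infty$-level) and a term linear in $V$ lying in $H^N$, so that $\|fg\|_{H^N} \lesssim (\|f\|_{L^\infty}+\|f\|_{H^N})\|g\|_{H^N}$ gives the bound. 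For $B(V)$, the entry $2/(\varphi + 2\bar\lambda)$ is handled by writing $2/(\varphi + 2\bar\lambda) = 1/\bar\lambda + G(\varphi)$ with $G(y) = 2/(y+2\bar\lambda) - 1/\bar\lambda$ smooth and vanishing at the origin; the positive lower bound $\varphi + 2\bar\lambda \geq \mu$ from \eqref{eq:assume v} together with the standard Sobolev composition estimate places $G(\varphi) \in H^N$ with norm controlled by $E_N(V) \leq 2 E_N(U_0)$. The outer mollifier is once more bounded on $H^N$, and the composition of these estimates yields $\|L_\varepsilon(t)\|_{\mathcal{L}(H^N(\R))} \leq C(\varepsilon, E_N(U_0), \mu)$ on $[0,T_1]$.

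Continuity of $t \mapsto L_\varepsilon(t)$ in the strong (in fact norm) operator topology then follows from $V \in C([0,T_1]; H^N)$ together with the continuity of multiplication, mollification and composition with the smooth function $G$. Since the equation is \emph{linear} in $U_\varepsilon$, no smallness of time or data is needed: Picard iteration converges on the whole interval $[0,T_1]$ via Gr\"onwall, and $U_\varepsilon$ automatically belongs to $C^1([0,T_1];H^N)$ because the right-hand side is continuous with values in $H^N$. The only delicate step is the treatment of $B(V)$, where the lower bound in \eqref{eq:assume v} is essential to keep $1/(\varphi + 2\bar\lambda)$ meaningful at the $H^N$-level; this is precisely the non-vanishing requirement that forces the conditional nature of the main theorem.
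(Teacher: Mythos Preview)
Your proposal is correct and follows essentially the same approach as the paper: both recast \eqref{eq:regularised} as a linear ODE in $H^N(\R)$, establish boundedness of the right-hand side (you via the splitting of $A(V)$ and $B(V)$ into constant plus $H^N$-parts combined with the algebra property of $H^N$, the paper via the homogeneous tame product estimate and Gagliardo--Nirenberg interpolation), and then invoke Picard together with Gr\"onwall to obtain the solution on all of $[0,T_1]$. The only cosmetic differences are that you make the time-continuity of $L_\varepsilon(t)$ explicit and treat $B(V)$ by a Moser-type composition estimate rather than the direct Leibniz expansion used in the paper.
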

\begin{proof}  We  express \eqref{eq:regularised} as an ODE in the Hilbert space $H^N(\mathbb{R})$:
\begin{align*}
\partial_tU_\varepsilon &=F(U_\varepsilon), \qquad U_\varepsilon(0,x)=U_0(x),
\end{align*}
with
\begin{align*}
F(U_\varepsilon)&=-\mathcal{J}_\varepsilon [\mathcal{J}_\varepsilon(A(V))\partial_x(\mathcal{J}_\varepsilon U_\varepsilon)]-\mathcal{J}_\varepsilon[\mathcal{J}_\varepsilon(B(V))\mathcal{K}\partial_x(\mathcal{J}_\varepsilon U_\varepsilon)]\nonumber\\
&=:F_1(U_\varepsilon)+F_2(U_\varepsilon).
\end{align*}

We first show that the map $F$ is bounded from $H^N(\mathbb{R})$ to $H^N(\mathbb{R})$. Because of the constant term appearing in \(\varphi + 2 \bar \lambda\) we shall use homogeneous estimates, in particular the following tame product estimate (cf. \cite{taylor1997partial})
 \begin{equation}\label{eq:tame k}
 \|\partial_x^k(fg)\|_{L^2(\mathbb{R})}\lesssim \|f\|_{L^\infty(\mathbb{R})}\|\partial_x^kg\|_{L^2(\mathbb{R})}+\|g\|_{L^\infty(\mathbb{R})}\|\partial_x^kf\|_{L^2(\mathbb{R})},
 \end{equation}
valid for integers $k\geq0$.
Thus, it follows from Lemma \ref{lemma:mollification} that
\begin{equation}\label{10.2}
\begin{aligned}
 &\|\partial_x^{N+1}(\mathcal{J}_\varepsilon(A(V))\partial_x(\mathcal{J}_\varepsilon U_\varepsilon))\|_{L^2(\mathbb{R})}\\
&\lesssim\| A(V)\|_{L^\infty(\mathbb{R})}\|\partial_x (\mathcal{J}_\varepsilon U_\varepsilon) \|_{H^{N+1}(\mathbb{R})}
+\|\partial_xU_\varepsilon\|_{L^\infty(\mathbb{R})}\|\partial_x^{N+1}\mathcal{J}_\varepsilon(A(V))\|_{L^2(\mathbb{R})}
\end{aligned}
\end{equation}
and
\begin{equation}
\begin{aligned}\label{10.4}
 &\|\mathcal{J}_\varepsilon(A(V))\partial_x(\mathcal{J}_\varepsilon U_\varepsilon)\|_{L^2(\mathbb{R})}
\lesssim \|A(V)\|_{L^\infty(\mathbb{R})}\|\partial_x(\mathcal{J}_\varepsilon U_\varepsilon)\|_{L^2(\mathbb{R})}\\
&\lesssim\| A(V)\|_{L^\infty(\mathbb{R})}\|\partial_x (\mathcal{J}_\varepsilon U_\varepsilon) \|_{H^{N+1}(\mathbb{R})}
+\|\partial_xU_\varepsilon\|_{L^\infty(\mathbb{R})}\|\partial_x^{N+1}\mathcal{J}_\varepsilon(A(V))\|_{L^2(\mathbb{R})}.
\end{aligned}
\end{equation}
Recall now the standard Gagliardo--Nirenberg interpolation inequality (see \cite{taylor1997partial}),
\begin{equation}\label{10.6}
\|\partial_x^lf\|_{L^2(\mathbb{X})}\leq C\|f\|_{L^2(\mathbb{X})}^{1-\frac{l}{k}}\|\partial_x^kf\|_{L^2(\mathbb{X})}^{\frac{l}{k}}, \qquad 0 \leq l \leq k.
\end{equation}
In view of \eqref{10.2}-\eqref{10.6} combined with the assumption \eqref{eq:assume v} on $V$ we obtain, using Lemma \ref{lemma:mollification}, that
\begin{equation}
\begin{aligned}\label{10.8}
&\|F_1(U_\varepsilon)\|_{H^N(\mathbb{R})} \\
&\lesssim \sum_{i=0}^N\| \mathcal{J}_\varepsilon(A(V))\partial_x(\mathcal{J}_\varepsilon U_\varepsilon)\|_{L^2(\mathbb{R})}^{1-\frac{i}{N+1}}\|\partial_x^{N+1}(\mathcal{J}_\varepsilon(A(V))\partial_x(\mathcal{J}_\varepsilon U_\varepsilon))\|_{L^2(\mathbb{R})}^{\frac{i}{N+1}}\\
&\lesssim \| A(V)\|_{L^\infty(\mathbb{R})}\|\partial_x(\mathcal{J}_\varepsilon U_\varepsilon)\|_{H^{N+1}(\mathbb{R})}
+\|\partial_xU_\varepsilon\|_{L^\infty(\mathbb{R})}\|\partial_x^{N+1}\mathcal{J}_\varepsilon(A(V))\|_{L^2(\mathbb{R})}\\
&\lesssim (\|V\|_{H^1(\mathbb{R})}+\bar\lambda)(\varepsilon^{-2}\|U_\varepsilon\|_{H^N(\mathbb{R}}))
+\|U_\varepsilon\|_{H^2(\mathbb{R})}(\varepsilon^{-1}\|V\|_{H^N(\mathbb{R})})\\
&\lesssim \varepsilon^{-2} (\mu^{-1} + E_N(U_0)^{\frac{1}{2}})\|U_\varepsilon\|_{H^N(\mathbb{R})}.
\end{aligned}
\end{equation}

Notice that since $\tanh(|\xi|)\leq 1$, it holds that
\[
\|\mathcal{K}\partial_xf\|_{H^s(\mathbb{R})}^2=\int_{\mathbb{R}}\frac{\xi^2 \tanh^2(\xi)}{\xi^2} (1+\xi^2)^s|\hat{f}(\xi)|^2\, \diff \xi\leq\|f\|_{H^s(\mathbb{R})}^2.
\]
Similar to \eqref{10.2} and \eqref{10.4} one has
\begin{equation}
\begin{aligned}\label{11.2}
&\|\mathcal{J}_\varepsilon(B(V))\mathcal{K}\partial_x(\mathcal{J}_\varepsilon U_\varepsilon)\|_{L^2(\mathbb{R})}+\|\partial_x^{N+1}(\mathcal{J}_\varepsilon(B(V))\mathcal{K}\partial_x(\mathcal{J}_\varepsilon U_\varepsilon))\|_{L^2(\mathbb{R})}\\
&\lesssim\| B(V)\|_{L^\infty(\mathbb{R})}\|\mathcal{K}\partial_x(\mathcal{J}_\varepsilon U_\varepsilon)\|_{H^{N+1}(\mathbb{R})}
+\|\mathcal{K}\partial_xU_\varepsilon\|_{L^\infty(\mathbb{R})}\|\partial_x^{N+1}\mathcal{J}_\varepsilon(B(V))\|_{L^2(\mathbb{R})}.
\end{aligned}
\end{equation}
Thus, \eqref{11.2} and the assumptions \eqref{eq:assume v} on $V$  show that
\begin{equation}
\begin{aligned}\label{12}
&\|F_2(U_\varepsilon)\|_{H^N(\mathbb{R})}\\
&\lesssim \sum_{i=0}^N\| \mathcal{J}_\varepsilon(B(V))\mathcal{K}\partial_x(\mathcal{J}_\varepsilon U_\varepsilon)\|_{L^2(\mathbb{R})}^{1-\frac{i}{N+1}}\|\partial_x^{N+1}(\mathcal{J}_\varepsilon(B(V))\mathcal{K}\partial_x(\mathcal{J}_\varepsilon U_\varepsilon))\|_{L^2(\mathbb{R})}^{\frac{i}{N+1}}\\
&\lesssim \| B(V)\|_{L^\infty(\mathbb{R})} \|\mathcal{K}\partial_x(\mathcal{J}_\varepsilon U_\varepsilon)\|_{H^{N+1}(\mathbb{R})}
+\|\mathcal{K}\partial_xU_\varepsilon\|_{L^\infty(\mathbb{R})}\|\partial_x^{N+1}\mathcal{J}_\varepsilon(B(V))\|_{L^2(\mathbb{R})}\\
&\lesssim (\mu \varepsilon)^{-1}\|U_\varepsilon\|_{H^N(\mathbb{R})}
+ (\mu \varepsilon)^{-N-1}\|U_\varepsilon\|_{H^1(\mathbb{R})}\sum_{i=0}^{N}{E_{N}(t,V)^{\frac{i}{2}}} \\
&\lesssim (\mu \varepsilon)^{-N-1} \|U_\varepsilon\|_{H^N(\mathbb{R})},
\end{aligned}
\end{equation}
implying the \(H^N\)-continuity of \(F\) (here and in the following paragraph we have suppressed the dependence on \(E_N(U_0))\). Because \(F\) is linear in \(U\) it is also locally Lipschitz  continuous  on any open set of $H^N(\mathbb{R})$, with the same estimates as above:
\[
\|F(U_\varepsilon^1)-F(U_\varepsilon^2)\|_{H^N(\mathbb{R})}
\lesssim (\mu \varepsilon)^{-N-1}  \|U_\varepsilon^1-U_\varepsilon^2\|_{H^N(\mathbb{R})}.
\]
Therefore, for any initial data $U_0\in H^N(\mathbb{R})$, Picard's theorem implies the existence of a positive time $T_\varepsilon$ and a unique solution $U_\varepsilon\in C^1([0,T_\varepsilon];H^N(\mathbb{R}))$ of the regularised problem \eqref{eq:regularised}.

Finally, we need only to show an a priori bound of
$\|U_\varepsilon(t,\cdot)\|_{H^N(\mathbb{R})}$ on $[0,T_1]$, which makes sure
that $T_\varepsilon$ can be extended to $T_1$. In fact,  it follows from \eqref{eq:regularised} that
\[
\frac{d}{dt}\|U_\varepsilon\|_{H^N(\mathbb{R})} \lesssim (\mu \varepsilon)^{-N-1}  \|U_\varepsilon\|_{H^N(\mathbb{R})},
\]
for all \(t \in [0,T_1]\). Combined with Gr\"onwall's inequality this gives
\[
\|U_\varepsilon(t,\cdot)\|_{H^N(\mathbb{R})}\lesssim 1 \quad \text{ for all } t\in[0,T_1],
\]
and where the estimate may grow, exponentially, in \((\mu \varepsilon)^{-N-1} \).
\end{proof}

\section{The linearised problem}\label{sec:linearised}
In this section we develop a priori estimates enabling us to take a limit in the regularised equation \eqref{eq:regularised}, thereby solving the linearised problem
\begin{equation}\label{eq:linearised}
\partial_tU+A(V)\partial_xU+B(V)\mathcal{K}\partial_xU=0,
\end{equation}
with \(U(0,x)=U_0(x)\). The main estimates appear in the proof of the following result.

\begin{proposition}\label{prop:linearised}  For any \(N \geq 2\) and any \(\mu\) as in \eqref{eq:assume u} and \eqref{eq:assume v} there exist a positive number $T_2$ and a unique  solution $U\in C([0,T_2];H^N(\mathbb{R}))\cap C^1([0,T_2];H^{N-1}(\mathbb{R}))$ of \eqref{eq:linearised} that satisfies
\begin{eqnarray*}
\max_{0\leq t\leq T_2}E_N(t,U)\leq2E_N(U_0),
\end{eqnarray*}
where the above norms of \(U\) for a fixed \(N\) depend only on \(\mu\) and \(E_N(U_0)\).
\end{proposition}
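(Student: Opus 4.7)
My plan is to realise the solution $U$ of \eqref{eq:linearised} as the $\varepsilon \to 0^+$ limit of the regularised solutions $U_\varepsilon$ from Section~\ref{sec:regularised}. The argument splits into (i) energy estimates uniform in $\varepsilon$ that extend every $U_\varepsilon$ to a common interval $[0,T_2]$ with $E_N(t,U_\varepsilon) \leq 2 E_N(U_0)$, and (ii) a Cauchy argument in $C([0,T_2];L^2(\mathbb{R}))$. Three structural features drive the analysis: $A(V)$ is symmetric, $\mathcal{J}_\varepsilon$ is self-adjoint and of operator norm $\lesssim 1$ on every Sobolev space, and $\mathcal{K}\partial_x$ is bounded on every $H^s(\mathbb{R})$ because $|\tanh\xi|\leq 1$.

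For (i), I would apply $\partial_x^k$ to \eqref{eq:regularised} for $0\leq k\leq N$, pair in $L^2$ with $\partial_x^k U_\varepsilon$, and transfer the outer $\mathcal{J}_\varepsilon$ onto the test function by self-adjointness. The principal term
\[
\bigl(\mathcal{J}_\varepsilon A(V)\,\partial_x \mathcal{J}_\varepsilon \partial_x^k U_\varepsilon,\, \mathcal{J}_\varepsilon \partial_x^k U_\varepsilon\bigr)_2
\]
collapses, by the symmetry of $A(V)$ and one integration by parts, to $-\tfrac{1}{2}(\partial_x \mathcal{J}_\varepsilon A(V)\,\mathcal{J}_\varepsilon \partial_x^k U_\varepsilon,\, \mathcal{J}_\varepsilon \partial_x^k U_\varepsilon)_2$, which is bounded by $\|\partial_x A(V)\|_{L^\infty}\|\partial_x^k U_\varepsilon\|_{L^2}^2 \lesssim C(\mu, E_N(U_0))\, E^{(k)}(t,U_\varepsilon)$ using the embedding $H^N \hookrightarrow C^1$ (here $N\geq 2$) together with \eqref{eq:assume v}. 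The commutator $[\partial_x^k, \mathcal{J}_\varepsilon A(V)]\partial_x \mathcal{J}_\varepsilon U_\varepsilon$ is handled by a Kato--Ponce-type tame commutator estimate in which only \emph{homogeneous} Sobolev norms of $A(V)$ enter, so that the constant $2\bar\lambda$ drops out and the bound is uniform in $\varepsilon$. For the non-symmetric $B$-term, the $L^2$-boundedness of $\mathcal{K}\partial_x$ combined with \eqref{eq:tame k} reduces matters to a bound of the same shape; the factor $1/(\varphi+2\bar\lambda)$ inside $B(V)$ is controlled in $L^\infty$ by $\mu^{-1}$ from \eqref{eq:assume v}, and its Sobolev derivatives via the quotient rule, which again places only homogeneous norms of $\varphi$ on the right. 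Summing over $k$ and applying Grönwall yields $E_N(t,U_\varepsilon) \leq e^{Ct} E_N(U_0)$ with $C=C(\mu,E_N(U_0))$ independent of $\varepsilon$, so a fixed $T_2 = T_2(\mu,E_N(U_0))$ realises the claim.

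For (ii), setting $W = U_\varepsilon - U_{\varepsilon'}$ and decomposing
\[
\partial_t W = F_\varepsilon(W) + \bigl(F_\varepsilon - F_{\varepsilon'}\bigr)(U_{\varepsilon'}),
\]
the $L^2$-pairing with $W$ yields a $\|W\|_{L^2}^2$-type control of the first piece (via (i) at $k=0$), while in the second piece each of the three occurrences of $\mathcal{J}_\varepsilon-\mathcal{J}_{\varepsilon'}$ converts, through the last estimate of Lemma~\ref{lemma:mollification}, into a gain of $|\varepsilon-\varepsilon'|$ at the cost of one spatial derivative, absorbed by the uniform $H^N$-bound from (i). Grönwall then produces $\|W\|_{C([0,T_2];L^2)} \lesssim |\varepsilon-\varepsilon'|$, so $\{U_\varepsilon\}$ is Cauchy in $C([0,T_2];L^2(\mathbb{R}))$, and Gagliardo--Nirenberg interpolation against the uniform $H^N$-bound upgrades this to convergence in $C([0,T_2];H^s(\mathbb{R}))$ for every $s<N$, enough to pass to the limit in \eqref{eq:regularised} and identify the solution of \eqref{eq:linearised}. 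Weak-$*$ lower semicontinuity then gives $E_N(t,U) \leq 2 E_N(U_0)$, a Bona--Smith-type argument based on the two-sided energy estimate for the linear equation upgrades weak to strong $H^N$-continuity in time, $\partial_t U \in C([0,T_2];H^{N-1}(\mathbb{R}))$ is read off from \eqref{eq:linearised}, and uniqueness is a one-line $L^2$-energy estimate on the difference of two solutions sharing the same $V$.

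The main obstacle, in my view, is part (i): keeping the energy estimate uniform in $\varepsilon$ in the presence of the nested mollifiers and the non-decaying constant $2\bar\lambda$ embedded in $A(V)$ and $B(V)$. The remedies, already foreshadowed in Section~\ref{sec:regularised}, are that \emph{all} product and commutator estimates must be invoked in their homogeneous form so that $2\bar\lambda$ cancels, and that the outer $\mathcal{J}_\varepsilon$ must be placed carefully onto the test slot so that the symmetric principal contribution genuinely integrates out. Once that bookkeeping is in place, the remainder is the classical Friedrichs regularisation/Bona--Smith scheme.
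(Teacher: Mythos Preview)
Your proposal is correct and follows essentially the same route as the paper: uniform-in-$\varepsilon$ energy estimates via Leibniz expansion (your ``Kato--Ponce-type commutator'' is exactly the paper's explicit case splitting $l=0$, $1\le l\le k-1$, $l=k$), an $L^2$ Cauchy argument with the mollifier-difference estimate from Lemma~\ref{lemma:mollification}, interpolation to $H^s$ for $s<N$, weak compactness plus norm continuity to recover $C([0,T_2];H^N)$, and an $L^2$ energy estimate for uniqueness. The only terminological mismatch is that the paper upgrades weak to strong $H^N$-continuity not via a Bona--Smith approximation but by combining weak lower semicontinuity with the energy inequality and time-reversibility to get norm continuity at every $t$; this is what you mean by the ``two-sided energy estimate,'' so the content is the same.
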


\begin{proof}
We apply $\partial_x^k$, $0\leq k\leq N$,  to \eqref{eq:regularised} and get
\begin{align*}
\partial_tU_\varepsilon^{(k)} &+\sum_{l=0}^kC_k^l\mathcal{J}_\varepsilon [ \mathcal{J}_\varepsilon(A(V^{(l)}))\partial_x(\mathcal{J}_\varepsilon U_\varepsilon^{(k-l)})]\\
& +\sum_{l=0}^kC_k^l\mathcal{J}_\varepsilon [\mathcal{J}_\varepsilon(B^{(l)}(V))\mathcal{K}\partial_x(\mathcal{J}_\varepsilon U_\varepsilon^{(k-l)})]=0.
\end{align*}
Thus
\begin{equation}\label{eq:dt Ek}
\begin{aligned}
\frac{1}{2}\frac{d}{dt}E^{(k)}(t,U_\varepsilon)
&=
-\sum_{l=0}^kC_k^l( \mathcal{J}_\varepsilon(A(V^{(l)}))\partial_x(\mathcal{J}_\varepsilon U_\varepsilon^{(k-l)}), \mathcal{J}_\varepsilon U_\varepsilon^{(k)})_2\\
&-\sum_{l=0}^kC_k^l(\mathcal{J}_\varepsilon(B^{(l)}(V))\mathcal{K}\partial_x(\mathcal{J}_\varepsilon U_\varepsilon^{(k-l)}), \mathcal{J}_\varepsilon U_\varepsilon^{(k)})_2.
\end{aligned}
\end{equation}
We start by investigating the terms in \eqref{eq:dt Ek} that include the matrix \(A(V)\). Because of the affine shift \(\varphi + 2 \bar\lambda\) appearing in the equation we consider separately the cases \(l=0\) and $1\leq l\leq k$. When $l=0$, the symmetry of $A(V)$ and integration by parts imply that
\begin{align*}
-(\mathcal{J}_\varepsilon(A(V))\partial_x(\mathcal{J}_\varepsilon U_\varepsilon^{(k)}), \mathcal{J}_\varepsilon U_\varepsilon^{(k)})_2
&=\frac{1}{2}(\mathcal{J}_\varepsilon(\partial_x[A(V)])\mathcal{J}_\varepsilon U_\varepsilon^{(k)}, \mathcal{J}_\varepsilon U_\varepsilon^{(k)})_2\\
&\lesssim \|V\|_{H^2(\mathbb{R})}\|U_\varepsilon^{(k)}\|_{L^2(\mathbb{R})}^2.
\end{align*}
On the other hand, when $1\leq l\leq k$ one has a derivative on the matrix \(A(V)\) to annihilate the constant in \(\varphi + 2 \bar\lambda\). For $1\leq l\leq k-1$, one has 
\begin{align*}
	&(\mathcal{J}_\varepsilon(A(V^{(l)}))\partial_x(\mathcal{J}_\varepsilon U_\varepsilon^{(k-l)}), \mathcal{J}_\varepsilon U_\varepsilon^{(k)})_2\\
	&\lesssim \|A(V^{(l)})\|_{L^\infty(\mathbb{R})}\|\partial_x( U_\varepsilon^{(k-l)})\|_{L^2(\mathbb{R})}\|U_\varepsilon^{(k)}\|_{L^2(\mathbb{R})}\\
	&\lesssim \|V\|_{H^k(\mathbb{R})}\|U_\varepsilon\|_{H^k(\mathbb{R})}\|U_\varepsilon^{(k)}\|_{L^2(\mathbb{R})},
\end{align*}
and, for $l=k$,
\begin{align*}
&(\mathcal{J}_\varepsilon(A(V^{(l)}))\partial_x(\mathcal{J}_\varepsilon U_\varepsilon^{(k-l)}), \mathcal{J}_\varepsilon U_\varepsilon^{(k)})_2\\
&\lesssim \|A(V^{(l)})\|_{L^2(\mathbb{R})}\|\partial_xU_\varepsilon\|_{L^\infty(\mathbb{R})}\|U_\varepsilon^{(k)}\|_{L^2(\mathbb{R})}\\
&\lesssim \|V\|_{H^k(\mathbb{R})}\|U_\varepsilon\|_{H^2(\mathbb{R})}\|U_\varepsilon^{(k)}\|_{L^2(\mathbb{R})}.
\end{align*}
Now, as what concerns the terms in \eqref{eq:dt Ek} including \(B(V)\), note first that
\begin{align*}
&(\mathcal{J}_\varepsilon(B^{(l)}(V))\mathcal{K}\partial_x(\mathcal{J}_\varepsilon U_\varepsilon^{(k-l)}), \mathcal{J}_\varepsilon U_\varepsilon^{(k)})_2\\ 
&=2\int_{\mathbb{R}}\mathcal{J}_\varepsilon \left(\frac{1}{\varphi+2\bar\lambda}\right)^{(l)}\mathcal{K}\partial_x(\mathcal{J}_\varepsilon u_\varepsilon^{(k-l)})\mathcal{J}_\varepsilon\zeta_\varepsilon^{(k)} \, \diff x.
\end{align*}
The case $l=0$ is straightforward, as
\begin{align*}
\int_{\mathbb{R}}\mathcal{J}_\varepsilon \left(\frac{1}{\varphi+2\bar\lambda} \right)\mathcal{K}\partial_x(\mathcal{J}_\varepsilon u_\varepsilon^{(k)})\mathcal{J}_\varepsilon \zeta_\varepsilon^{(k)} \, \diff x
&\lesssim \mu^{-1} \|\mathcal{K}\partial_xu_\varepsilon^{(k)}\|_{L^2(\mathbb{R})}\|\zeta_\varepsilon^{(k)}\|_{L^2(\mathbb{R})}\\
&\lesssim \mu^{-1}\|u_\varepsilon^{(k)}\|_{L^2(\mathbb{R})}\|\zeta_\varepsilon^{(k)}\|_{L^2(\mathbb{R})}.
\end{align*}
On the other hand, when $1\leq l\leq k$, Leibniz's rule and the assumptions \eqref{eq:assume v} on $V$ yield that
\begin{align*}
\|\mathcal{J}_\varepsilon(({\textstyle \frac{1}{\varphi+2\bar\lambda}})^{(l)})\|_{L^2(\mathbb{R})}&\lesssim \mu^{-l} (\|\varphi^{(l)}\|_{L^2(\mathbb{R})}+\cdots+\|\partial_x\varphi\|_{L^2(\mathbb{R})}\|\partial_x\varphi\|_{L^\infty(\mathbb{R})}^{(l-1)})\\
&\leq \mu^{-l} \sum_{i=1}^N(2E_N(U_0))^{\frac{i}{2}}.
\end{align*}
For the same range of $l$, we thus deduce that
\begin{align*}
&\int_{\mathbb{R}}\mathcal{J}_\varepsilon  \left(\frac{1}{\zeta+2\bar\lambda}\right)^{(l)} \mathcal{K}\partial_x(\mathcal{J}_\varepsilon u_\varepsilon^{(k-l)})\mathcal{J}_\varepsilon\zeta_\varepsilon^{(k)} \, \diff x\\
&\lesssim \sum_{i=1}^N(2E_N(U_0))^{\frac{i}{2}} \|\mathcal{K}\partial_xu_\varepsilon^{(k-l)}\|_{L^\infty(\mathbb{R})}\|\zeta_\varepsilon^{(k)}\|_{L^2(\mathbb{R})}\\
&\lesssim \sum_{i=1}^N(2E_N(U_0))^{\frac{i}{2}} \|u_\varepsilon^{(k-l)}\|_{H^1(\mathbb{R})}\|\zeta_\varepsilon^{(k)}\|_{L^2(\mathbb{R})},
\end{align*}
where we have now suppressed the dependence on \(\mu\), as it is fixed and from the above estimates clearly controlled. Therefore, the \(B\)-part of \eqref{eq:dt Ek} may be controlled as
\begin{align*}
&-\sum_{l=0}^kC_k^l(\mathcal{J}_\varepsilon(B^{(l)}(V))\mathcal{K}\partial_x(\mathcal{J}_\varepsilon U_\varepsilon^{(k-l)}), \mathcal{J}_\varepsilon U_\varepsilon^{(k)})_2\\
&\lesssim (1+\sum_{i=1}^N(2E_N(U_0))^{\frac{i}{2}}) \|u_\varepsilon\|_{H^k(\mathbb{R})}\|\zeta_\varepsilon\|_{H^k(\mathbb{R})}.
\end{align*}
We conclude from this and the above estimates for the \(A\)-part that, in total,
\begin{equation}\label{eq:dt Ek bound}
\frac{d}{dt}E^{(k)}(t,U_\varepsilon)\lesssim (1+\sum_{i=1}^N(2E_N(U_0))^{\frac{i}{2}}) ( E^{(k)}(t,U_\varepsilon) )^{\frac{1}{2}} ( E_N (t,U_\varepsilon) )^{\frac{1}{2}},
\end{equation}
where the estimate is uniform in $\varepsilon$. Summing over $k$ from $0$ to $N$ gives
\begin{equation}\label{eq:dt EN bound}
\frac{d}{dt}E_N(t,U_\varepsilon)\lesssim (1+\sum_{i=1}^N(2E_N(U_0))^{\frac{i}{2}}) E_N(t,U_\varepsilon),
\end{equation}
and Gr\"onwall's inequality now guarantees the existence of 
\[
T_2 \eqsim \min\left(T_1,\frac{\ln2}{(1+\sum_{i=1}^N(2E_N(U_0))^{\frac{i}{2}})}\right)
\] 
such that
\begin{equation}\label{eq:a priori energy}
\max_{0\leq t\leq T_2}E_N(t,U_\varepsilon)\leq2E_N(U_0).
\end{equation}
The family $\{U_\varepsilon\}_{\varepsilon}$ is therefore uniformly bounded in $C([0, T_2]; H^N(\mathbb{R}))$.

{\bf Convergence.} We shall now prove that a subsequence of the family $\{U_\varepsilon\}_\varepsilon$ defines a Cauchy sequence in $C({[0,T_2]; L^2 (\mathbb{R})})$. It follows from  \eqref{eq:regularised} that the difference $U_\varepsilon-U_{\varepsilon'}$ of two solutions of the regularised problem satisfies 
\begin{align*}
\partial_t(U_\varepsilon-U_{\varepsilon'})+\mathcal{J}_\varepsilon[\mathcal{J}_\varepsilon(A(V))\partial_x(\mathcal{J}_\varepsilon U_\varepsilon)]-\mathcal{J}_{\varepsilon'}[\mathcal{J}_{\varepsilon'}(A(V))\partial_x(\mathcal{J}_{\varepsilon'} U_{\varepsilon'})]&\\
+\mathcal{J}_\varepsilon[\mathcal{J}_\varepsilon(B(V))\mathcal{K}\partial_x(\mathcal{J}_\varepsilon U_\varepsilon)]-\mathcal{J}_{\varepsilon'}[\mathcal{J}_{\varepsilon'}(B(V))\mathcal{K}\partial_x(\mathcal{J}_{\varepsilon'} U_{\varepsilon'})]&=0,
\end{align*}
on $[0, T_2]\times\mathbb{R}$, while additionally having vanishing initial data \((U_\varepsilon-U_{\varepsilon'})|_{t=0}=0\).
Therefore,
\begin{align*}
&\frac{1}{2}\frac{d}{dt}\|U_\varepsilon-U_{\varepsilon'}\|_{L^2(\mathbb{R})}^2\\
&=-(\mathcal{J}_\varepsilon[\mathcal{J}_\varepsilon(A(V))\partial_x(\mathcal{J}_\varepsilon U_\varepsilon)]-\mathcal{J}_{\varepsilon'}[\mathcal{J}_{\varepsilon'}(A(V))\partial_x(\mathcal{J}_{\varepsilon'} U_{\varepsilon'})],
U_\varepsilon-U_{\varepsilon'})_2\\
&\quad-(\mathcal{J}_\varepsilon[\mathcal{J}_\varepsilon(B(V))\mathcal{K}\partial_x(\mathcal{J}_\varepsilon U_\varepsilon)]-\mathcal{J}_{\varepsilon'}[\mathcal{J}_{\varepsilon'}(B(V))\mathcal{K}\partial_x(\mathcal{J}_{\varepsilon'} U_{\varepsilon'})],U_\varepsilon-U_{\varepsilon'})_2\\
&=:I+J.
\end{align*}
We split the  terms $I$ and $J$ into the additional parts
\begin{align*}
I
&=-(\mathcal{J}_\varepsilon[\mathcal{J}_\varepsilon(A(V))\partial_x(\mathcal{J}_\varepsilon (U_\varepsilon-U_{\varepsilon'}))],
U_\varepsilon-U_{\varepsilon'})_2\\
&-(\mathcal{J}_\varepsilon[\mathcal{J}_\varepsilon(A(V))\partial_x((\mathcal{J}_\varepsilon-\mathcal{J}_{\varepsilon'})U_{\varepsilon'})],
U_\varepsilon-U_{\varepsilon'})_2\\
&-(\mathcal{J}_\varepsilon[(\mathcal{J}_\varepsilon-\mathcal{J}_{\varepsilon'})(A(V))\partial_x(\mathcal{J}_{\varepsilon'} U_{\varepsilon'})],
U_\varepsilon-U_{\varepsilon'})_2\\
&-((\mathcal{J}_\varepsilon-\mathcal{J}_{\varepsilon'})[\mathcal{J}_{\varepsilon'}(A(V))\partial_x(\mathcal{J}_{\varepsilon'} U_{\varepsilon'})],
U_\varepsilon-U_{\varepsilon'})_2\\
&=:I_1+I_2+I_3+I_4,
\end{align*}
and
\begin{align*}
J &=-(\mathcal{J}_\varepsilon[\mathcal{J}_\varepsilon(B(V))\mathcal{K}\partial_x(\mathcal{J}_\varepsilon (U_\varepsilon-U_{\varepsilon'}))],
U_\varepsilon-U_{\varepsilon'})_2\\
&-(\mathcal{J}_\varepsilon[\mathcal{J}_\varepsilon(B(V))\mathcal{K}\partial_x((\mathcal{J}_\varepsilon-\mathcal{J}_{\varepsilon'})U_{\varepsilon'})],
U_\varepsilon-U_{\varepsilon'})_2\\
&-(\mathcal{J}_\varepsilon[(\mathcal{J}_\varepsilon-\mathcal{J}_{\varepsilon'})(B(V))\mathcal{K}\partial_x(\mathcal{J}_{\varepsilon'} U_{\varepsilon'})],
U_\varepsilon-U_{\varepsilon'})_2\\
&-((\mathcal{J}_\varepsilon-\mathcal{J}_{\varepsilon'})[\mathcal{J}_{\varepsilon'}(B(V))\mathcal{K}\partial_x(\mathcal{J}_{\varepsilon'} U_{\varepsilon'})],
U_\varepsilon-U_{\varepsilon'})_2\\
&=: J_1+J_2+J_3+J_4.
\end{align*}
These eight terms may be estimated as follows. For \(I_1\) integration by parts yields
\begin{align*}
I_1
&=\frac{1}{2}(\mathcal{J}_\varepsilon(\partial_x[A(V)])\mathcal{J}_\varepsilon (U_\varepsilon-U_{\varepsilon'}),
\mathcal{J}_\varepsilon(U_\varepsilon-U_{\varepsilon'}))_2\\
&\lesssim \|\partial_x[A(V)]\|_{L^\infty(\mathbb{R})}\|U_\varepsilon-U_{\varepsilon'}\|_{L^2(\mathbb{R})}^2\\
&\lesssim E_N(U_0)^\frac{1}{2}\|U_\varepsilon-U_{\varepsilon'}\|_{L^2(\mathbb{R})}^2,
\end{align*}
and it is easy to see that
\begin{eqnarray*}
J_1\lesssim \|B(V)\|_{L^{\infty}} \|U_\varepsilon-U_{\varepsilon'}\|_{L^2(\mathbb{R})}^2\lesssim \mu^{-1} \|U_\varepsilon-U_{\varepsilon'}\|_{L^2(\mathbb{R})}^2.
\end{eqnarray*}
It follows from Lemma \ref{lemma:mollification} that
\begin{align*}
I_2&\lesssim |\varepsilon -\varepsilon' | \|A(V)\|_{L^\infty(\mathbb{R})}\|U_{\varepsilon'}\|_{H^2(\mathbb{R})}\|U_\varepsilon-U_{\varepsilon'}\|_{L^2(\mathbb{R})}\\
&\lesssim |\varepsilon -\varepsilon' | (E_N(U_0)^\frac{1}{2}+\bar{\lambda})E_N(U_0)^\frac{1}{2}\|U_\varepsilon-U_{\varepsilon'}\|_{L^2(\mathbb{R})},
\end{align*}
and
\begin{eqnarray*}
J_2&\lesssim& |\varepsilon -\varepsilon' | \|B(V)\|_{L^\infty(\mathbb{R})}\|U_{\varepsilon'}\|_{H^1(\mathbb{R})}\|U_\varepsilon-U_{\varepsilon'}\|_{L^2(\mathbb{R})}\nonumber\\
&\lesssim& |\varepsilon -\varepsilon' | \mu^{-1} E_N(U_0)^\frac{1}{2}\|U_\varepsilon-U_{\varepsilon'}\|_{L^2(\mathbb{R})}.
\end{eqnarray*}
To estimate \(I_3\) and \(J_3\), write
\[
A(U)=
  \begin{pmatrix}
    u & \frac{\zeta}{2} \\
    \frac{\zeta}{2} & u
  \end{pmatrix}
+
  \begin{pmatrix}
    0 & \bar\lambda \\
    \bar\lambda & 0
  \end{pmatrix}
=:A_1(U)+
  \begin{pmatrix}
    0 & \bar\lambda \\
    \bar\lambda & 0
  \end{pmatrix}.
\]
Then Lemma \ref{lemma:mollification} implies that
\begin{align*}
 \|(\mathcal{J}_\varepsilon-\mathcal{J}_{\varepsilon'})(A(V))\|_{L^\infty(\mathbb{R})} &\lesssim \|(\mathcal{J}_\varepsilon-\mathcal{J}_{\varepsilon'})(A_1(V))\|_{H^1(\mathbb{R})}  \lesssim |\varepsilon -\varepsilon'| E_N(U_0)^\frac{1}{2}.
\end{align*}
Similarly, by the assumption \eqref{eq:assume v} on $V$, we obtain
\[
\|(\mathcal{J}_\varepsilon- \mathcal{J}_{\varepsilon'})(B(V))\|_{L^\infty(\mathbb{R})} \lesssim \mu^{-2} \|(\mathcal{J}_\varepsilon- \mathcal{J}_{\varepsilon'})\varphi\|_{L^\infty(\mathbb{R})},
\]
which via Lemma~\ref{lemma:mollification} leads to
\[
 \|(\mathcal{J}_\varepsilon-\mathcal{J}_{\varepsilon'})(B(V))\|_{L^\infty(\mathbb{R})} \lesssim  |\varepsilon -\varepsilon'| \mu^{-2} E_N(U_0)^\frac{1}{2}.
\]
One thus obtains
\begin{align*}
I_3&\lesssim \|(\mathcal{J}_\varepsilon-\mathcal{J}_{\varepsilon'})(A(V))\|_{L^\infty(\mathbb{R})}\|U_{\varepsilon'}\|_{H^1(\mathbb{R})}\|U_\varepsilon-U_{\varepsilon'}\|_{L^2(\mathbb{R})}\\
&\lesssim |\varepsilon - \varepsilon' | E_N(U_0)\|U_\varepsilon-U_{\varepsilon'}\|_{L^2(\mathbb{R})}
\end{align*}
and
\begin{eqnarray*}
J_3
&\lesssim&\|(\mathcal{J}_\varepsilon-\mathcal{J}_{\varepsilon'})(B(V))\|_{L^\infty(\mathbb{R})}\|U_{\varepsilon'}\|_{L^2(\mathbb{R})}\|U_\varepsilon-U_{\varepsilon'}\|_{L^2(\mathbb{R})}\nonumber\\
&\lesssim& |\varepsilon - \varepsilon'| \mu^{-2} E_N(U_0)\|U_\varepsilon-U_{\varepsilon'}\|_{L^2(\mathbb{R})}.
\end{eqnarray*}
For the last two terms we have, in analogy with \eqref{10.2}--\eqref{10.4}, that
\begin{align*}
I_4&\lesssim \|(\mathcal{J}_\varepsilon-\mathcal{J}_{\varepsilon'})[\mathcal{J}_{\varepsilon'}(A(V))\partial_x(\mathcal{J}_{\varepsilon'} U_{\varepsilon'})]\|_{L^2(\mathbb{R})}\|U_\varepsilon-U_{\varepsilon'}\|_{L^2(\mathbb{R})}\\
&\lesssim |\varepsilon - \varepsilon'| \big(\| A(V)\|_{L^\infty(\mathbb{R})}\|\partial_x^2 U_{\varepsilon'}\|_{L^2(\mathbb{R})}\\
&\qquad+\|\partial_xU_\varepsilon\|_{L^\infty(\mathbb{R})}\|\partial_x A(V)\|_{L^2(\mathbb{R})}\big)  \|U_\varepsilon-U_{\varepsilon'}\|_{L^2(\mathbb{R})}\\
&\lesssim |\varepsilon - \varepsilon'| (E_N(U_0)^\frac{1}{2}+\mu^{-1})E_N(U_0)^\frac{1}{2}\|U_\varepsilon-U_{\varepsilon'}\|_{L^2(\mathbb{R})}
\end{align*}
and
\begin{align*}
J_4&\lesssim \|(\mathcal{J}_\varepsilon-\mathcal{J}_{\varepsilon'})[\mathcal{J}_{\varepsilon'}(B(V))\mathcal{K}\partial_x(\mathcal{J}_{\varepsilon'} U_{\varepsilon'})]\|_{L^2(\mathbb{R})}\|U_\varepsilon-U_{\varepsilon'}\|_{L^2(\mathbb{R})}\\
&\lesssim |\varepsilon - \varepsilon'| \big( \| B(V)\|_{L^\infty(\mathbb{R})}\|\mathcal{K}\partial_x^2 U_{\varepsilon'}\|_{L^2(\mathbb{R})}\\
&\quad +\|\mathcal{K}\partial_xU_\varepsilon\|_{L^\infty(\mathbb{R})}\|\partial_x B(V)\|_2 \big)  \|U_\varepsilon-U_{\varepsilon'}\|_{L^2(\mathbb{R})}\\
&\lesssim |\varepsilon - \varepsilon'| (E_N(U_0)^\frac{1}{2}+\mu^{-1}) \mu^{-1}E_N(U_0)^\frac{1}{2} \|U_\varepsilon-U_{\varepsilon'}\|_{L^2(\mathbb{R})}.
\end{align*}
Therefore, we conclude that
\[
\frac{d}{dt}\|U_\varepsilon-U_{\varepsilon'}\|_{L^2(\mathbb{R})}
\lesssim_{\mu, E_N(U_0)} \|U_\varepsilon-U_{\varepsilon'}\|_{L^2(\mathbb{R})}+|\varepsilon -\varepsilon'|,
\]
which by Gr{\"o}nwall's inequality gives that
\[
\max_{0\leq t\leq T_2}\|U_\varepsilon-U_{\varepsilon'}\|_{L^2(\mathbb{R})} \lesssim |\varepsilon -\varepsilon'|,
\]
where the estimate is uniform with respective to \(t \in [0,T_2]\) and fixed values of \(\mu\) and \(E_N(U_0)\). Consequently, up to subsequences, the family $\{U_\varepsilon\}_\varepsilon$ converges in $C([0, T_2]; L^2(\mathbb{R}))$ to a pair $U=(\zeta,u)$ as $\varepsilon\searrow 0$.
By the standard interpolation inequality 
\begin{eqnarray}\label{13}
\|\zeta_\varepsilon-\zeta\|_{H^s(\mathbb{R})}\leq \|\zeta_\varepsilon-\zeta\|_{L^2(\mathbb{R})}^{1-\frac{s}{N}}
\|\zeta_\varepsilon-\zeta\|_{H^N(\mathbb{R})}^{\frac{s}{N}},
\end{eqnarray}
which is valid for all  $s \in (0,N)$, see \cite{taylor1997partial},  the family $\{\zeta_\varepsilon\}_\varepsilon$ converges to $\zeta$ in $C([0, T_2]; H^s(\mathbb{R}))$. Similarly,  $\{u_\varepsilon\}$ converges to $u$ in $C([0, T_2]; H^s(\mathbb{R}))$ for the same values of \(s\). Thus,  $\{U_\varepsilon\}_\varepsilon$ converges to $U$ in $ C([0, T_2]; C^1(\mathbb{R}))$ by Sobolev embedding.
One can furthermore deduce from equation \eqref{eq:linearised} that $\{\partial_tU_\varepsilon\}$ converges to $\partial_tU$ in $C([0, T_2]; C(\mathbb{R}))$.
Consequently, $U$ is a classical solution to \eqref{eq:linearised}. 

The following standard argument, built upon the a priori estimate \eqref{eq:a priori energy} and the time reversibility of \eqref{eq:linearised}, shows that $U$ is also a unique classical solution in $C([0, T_2]; H^N(\mathbb{R}))\cap C^1([0, T_2]; H^{N-1}(\mathbb{R}))$. By the a priori estimate $E_N(U_\varepsilon)\leq 2E_N(U_0)$, one sees that the family  \(\{U_\varepsilon\}_\varepsilon\)  is uniformly bounded in \(L^2([0,T_2]; H^N(\mathbb{R}))\). By weak compactness, there thus exists a subsequence $\{U_{\varepsilon_j}\}_j \subset \{U_\varepsilon\}_\varepsilon$ such that
\[
U_{\varepsilon_j} \rightharpoonup U \qquad\text{ in } \quad L^2([0,T_2]; H^N(\mathbb{R})),
\]
as \(\varepsilon_j \searrow 0\). Furthermore, for each fixed time $t\in [0,T_2]$,  one can pick up a new subsequence $\{U_{\varepsilon_{j_l}}(t,\cdot)\}_{j_l} \subset \{U_{\varepsilon_j}(t,\cdot)\}_j $ such that 
\[
U_{\varepsilon_{j_l}}(t,\cdot) \rightharpoonup U(t,\cdot) \qquad\text{ in } \quad H^N(\mathbb{R}), 
\]	
as \(\varepsilon_{j_l} \searrow 0\), wherefore it holds that
\[
\sup_{t \in [0,T_2]} \|U(t,\cdot)\|_{H^N(\mathbb{R})} \leq \sup_{t \in [0,T_2]}  \liminf_{\varepsilon_{j_l} \to 0} \|U_{\varepsilon_{j_l}} (t,\cdot)\|_{H^N(\mathbb{R})} \leq 2 E_n(U_0),
\]
and $U\in L^\infty([0, T_2]; H^N(\mathbb{R}))$. When \(s \in (0,N)\), we have from \eqref{13} the stronger statement that $U_\varepsilon\rightarrow U$ in  $C([0,T_2]; H^s(\mathbb{R}))$. Now, pick a  test function $\phi\in H^{-s}(\mathbb{R})$. Then 
\begin{equation}\label{eq:H-s convergence}
(U_\varepsilon,\phi)_2(t)\rightarrow (U,\phi)_2(t) \qquad \text{uniformly for }  t \in [0, T_2]. 
\end{equation}
By further using that \(U\) and \(\{U_\varepsilon\}_\varepsilon\) are bounded in \(L^\infty([0, T_2]; H^N(\mathbb{R}))\) and that the embedding $H^{-s}(\mathbb{R}) \hookrightarrow H^{-N}(\mathbb{R})$ is dense, one finds that \eqref{eq:H-s convergence} holds also for $\phi\in H^{-N}(\mathbb{R})$.
In effect,
\[
\| U_0 \|_{H^N(\mathbb{R})}\leq \liminf_{t\downarrow 0} \| U(t,\cdot) \|_{H^N(\mathbb{R})}. 
\]
But from \eqref{eq:dt EN bound} we also have 
\[
\sup_{0\leq \tau\leq t}E_N(U_\varepsilon)\leq E_N(U_0)+ c_{E_N(U_0)} \int_0^t E_N(U_\varepsilon) \, \diff s,  \qquad t \in [0,T_2],
\]
whence $\limsup_{t\downarrow 0} \|U(t,\cdot)\|_{H^N(\mathbb{R})}\leq \|U_0\|_{H^N(\mathbb{R})}$. Thus 
\[
\lim_{t\downarrow 0} \|U(t,\cdot)\|_{H^N(\mathbb{R})} = \|U_0\|_{H^N(\mathbb{R})}. 
\]
Since equation \eqref{eq:linearised} is time reversible, \(\| U(t,\cdot) \|_{H^N(\R)}\) is in fact continuous at \(t=0\), with limit \( \|U_0 \|_{H^N(\mathbb{R})}\). Then, for any $t^*\in[0,T_2)$ one may regard $U(t^*,\cdot)$ as new initial data and re-solve the equations, to prove that $\|U(t,\cdot)\|_{H^N(\mathbb{R})}$ is everywhere continuous in \([0,T_2]\) (where we consider only continuity from the left at the endpoint \(t = T_2\). Finally, one finds $U\in C^1([0, T_2]; H^{N-1}(\mathbb{R}))$ from \eqref{eq:linearised} by using that $U\in C([0, T_2]; H^N(\mathbb{R}))$.

The uniqueness can then be easily proved as follows. Let $U$ and $\tilde{U}$ be two solutions of \eqref{eq:linearised} with the same initial data $U_0$, and let  $W=U-\tilde{U}$. Then
\begin{align*}
\partial_tW+A(V)\partial_xW+B(V)\mathcal{K}\partial_xW &=0,\\
W(0,x) &= 0,
\end{align*}
on $[0, T_2]\times\mathbb{R}$. Direct calculation shows that
\begin{align*}
{\textstyle \frac{1}{2}\frac{d}{dt}} \|W\|_{L^2(\mathbb{R})}^2 &=(\partial_x(A(V))W, W)_2-(B(V)\mathcal{K}\partial_xW, W)_2\\
&\lesssim  (E_N(U_0)^{\frac{1}{2}}+\mu^{-1})\|W\|_{L^2(\mathbb{R})}^2,
\end{align*}
which together with Gr{\"o}nwall's inequality yields that
\begin{align*}
W(t,x) = 0,\qquad (t,x) \in [0, T_2]\times\mathbb{R}.
\end{align*}
This concludes the proof.
\end{proof}

\section{Proof of the main theorem}\label{proof of main theorem}
In this section we give the proof of the main result on the line. A setup sufficient to follow the same procedure in the periodic case is described in the appendix.

\subsection{The case of the line, \(\mathbb{X} = \R\)}
We note first the following lemma, which is immediate from the uniform bound on \(\|\partial_t U(t,\cdot) \|_{H^N(\R)}\) proved in Proposition~\ref{prop:linearised}. 

\begin{lemma}\label{lemma:u gives v}
There exists \(T_3 \in (0, T_2]\), depending only on \(N\) and on \(\mu\), such that  if the initial data \(U_0\) satisfies \eqref{eq:assume u} then the assumption \eqref{eq:assume v} holds with  \(V\) replaced by $U$  on $[0, T_{3}]\times \R$, where $U$ is  the solution to the linearised equation \eqref{eq:linearised}.
\end{lemma}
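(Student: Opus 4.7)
The plan is to verify both parts of \eqref{eq:assume v} with $V$ replaced by $U$. The first, the energy bound $E_N(t,U)\leq 2E_N(U_0)$, is immediate from the a priori estimate \eqref{eq:a priori energy} proven in Proposition~\ref{prop:linearised}, valid on the whole interval $[0,T_2]$, so no extra work is needed there.

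The pointwise two-sided estimate $\mu \leq \zeta(t,x)+2\bar\lambda \leq \mu^{-1}$ is what will force the further restriction to a subinterval $[0,T_3]\subseteq[0,T_2]$. The starting point is the initial bound $2\mu \leq \zeta_0 + 2\bar\lambda \leq (2\mu)^{-1}$ from \eqref{eq:assume u}. Writing
\[
\zeta(t,x) - \zeta_0(x) = \int_0^t \partial_t\zeta(s,x)\,\diff s,
\]
I would reduce matters to a uniform-in-time $L^\infty$-bound on $\partial_t\zeta$. Reading the first component of the linearised equation \eqref{eq:linearised} and invoking (i) the mapping property $\|\mathcal{K}\partial_xf\|_{H^s(\R)}\leq \|f\|_{H^s(\R)}$ coming from $|\tanh\xi|\leq 1$, (ii) the hypotheses \eqref{eq:assume v} on $V$, (iii) the energy control $\|U(s,\cdot)\|_{H^N(\R)}\lesssim E_N(U_0)^{1/2}$ on $[0,T_2]$, and (iv) tame product estimates in the spirit of \eqref{eq:tame k}, one obtains
\[
\|\partial_tU(s,\cdot)\|_{H^{N-1}(\R)} \leq C, \qquad s\in[0,T_2],
\]
for some constant $C=C(\mu,E_N(U_0))$. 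Since $N\geq 2$, the Sobolev embedding $H^{N-1}(\R)\hookrightarrow L^\infty(\R)$ then yields $\|\zeta(t,\cdot)-\zeta_0\|_{L^\infty(\R)}\leq Ct$ on $[0,T_2]$.

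The conclusion follows by picking $T_3\in(0,T_2]$ so small that $CT_3\leq \mu$; this converts the $t=0$ bracket $2\mu\leq \zeta_0+2\bar\lambda\leq(2\mu)^{-1}$ into $\mu\leq\zeta(t,x)+2\bar\lambda\leq\mu^{-1}$ for every $(t,x)\in[0,T_3]\times\R$, completing the verification of \eqref{eq:assume v}. There is no substantive obstacle: the relevant estimates have already been carried out in Sections~\ref{sec:regularised}--\ref{sec:linearised}, and the only two details to be careful about are the Sobolev embedding (which is exactly why $N\geq 2$ is assumed) and the fact that the constant $C$ above is genuinely uniform in time on $[0,T_2]$, which is guaranteed by the uniform energy estimate \eqref{eq:a priori energy}.
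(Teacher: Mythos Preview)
Your argument is correct and matches the paper's approach exactly: the paper's entire justification is the single remark preceding the lemma, namely that it is ``immediate from the uniform bound on \(\|\partial_t U(t,\cdot)\|_{H^N(\R)}\) proved in Proposition~\ref{prop:linearised}'', and your proposal is precisely a fleshed-out version of that remark. One small point worth flagging: your constant \(C\) depends on \(E_N(U_0)\) as well as on \(\mu\), whereas the lemma as stated claims \(T_3\) depends only on \(N\) and \(\mu\); this is a minor imprecision in the paper's statement rather than in your argument, since all the bounds in Proposition~\ref{prop:linearised} carry the same \(E_N(U_0)\)-dependence.
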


We have now come to the proof of the main result. 
 
\begin{proof}[Proof of Theorem~\ref{thm:main}] 
With $U_0$ being our initial data, we consider the following  series of  linearised problems  for $m \in \N_0$.
\begin{equation}\label{14}
\begin{aligned}
\partial_tU_{m+1}+A(U_m)\partial_xU_{m+1}+B(U_m)\mathcal{K}\partial_xU_{m+1} &=0,\\
U_{m+1}(0,\cdot) &=U_0.
\end{aligned}
\end{equation}
Note that $u_0$ satisfies $\eqref{3}$, and that the positive constant \(\mu \leq \bar \lambda\) is chosen so that \eqref{eq:assume u} holds. By induction on $m$ and using Proposition~\ref{prop:linearised} and Lemma~\ref{lemma:u gives v}, for each $m$, there exists a solution $U_m\in C([0,T_3];H^N(\mathbb{R}))\cap C^1([0,T_3];H^{N-1}(\mathbb{R}))$ of \eqref{14} satisfying the assumption \eqref{eq:assume v} on \(V\) in the linearised equation \eqref{eq:linearised}. Therefore, for any $1\leq l \leq N$,
\begin{eqnarray*}
\Big\| \Big(\frac{1}{\zeta_m(t,\cdot)+2\bar\lambda}\Big)^{(l)}\Big\|_{L^2(\mathbb{R})} \lesssim_{\mu} \sum_{i=1}^NE_N(t,U_m)^{\frac{i}{2}}.
\end{eqnarray*}
We suppress now the dependence on \(\mu^{-1}\), since it is a fixed and bounded number. Similar to  \eqref{eq:dt EN bound}, we now have
\begin{eqnarray*}
\frac{d}{dt}E_N(t,U_{m+1})\lesssim (1+\sum_{i=1}^NE_N(t,U_m)^{\frac{i}{2}}) E_N(t,U_{m+1}),
\end{eqnarray*}
where the estimate is independent of $m$.
By induction on $m$, one has
\begin{eqnarray*}
\max_{0\leq t\leq T_3}E_N(t,U_m)\leq2E_N(U_0)\quad \text{ for all } m \in \N_0.
\end{eqnarray*}
The family $\{U_m\}$  is thus uniformly bounded in $C([0, T_3]; H^N(\mathbb{R}))$.

We shall now prove that  $\{U_m\}_{m}$ forms a Cauchy sequence in $C({[0,T_3]; L^2 (\mathbb{R})})$. For each \(m \geq 1\), let $W_{m+1}=U_{m+1}-U_m$. It then follows from \eqref{14} that 
\begin{align*}
\partial_tW_{m+1}+A(U_m)\partial_xW_{m+1}+B(U_m)\mathcal{K}\partial_xW_{m+1}\\
\quad+(A(U_m)-A_1(U_{m-1}))\partial_xU_m \:\:\\ 
\quad+(B(U_m)-B(U_{m-1}))\mathcal{K}\partial_xU_m &=0,\\
W_{m+1}(0,x) &= 0,
\end{align*}
again on $[0, T_3]\times\mathbb{R}$. Consequently, 
\begin{align*}
{\textstyle \frac{1}{2}\frac{d}{dt}} \|W_{m+1}\|_{L^2(\mathbb{R})}^2 &=-(A(U_m)\partial_xW_{m+1}, W_{m+1})_2\\
&\quad-(B(U_m)\mathcal{K}\partial_xW_{m+1}, W_{m+1})_2\\
&\quad-((A(U_m)-A(U_{m-1}))\partial_xU_m,W_{m+1})_2\\
&\quad -((B(U_m)-B(U_{m-1}))\mathcal{K}\partial_xU_m, W_{m+1})_2,
\end{align*}
and we may estimate the right-hand side as follows:
\begin{align*}
-(A(U_m)\partial_xW_{m+1}, W_{m+1})_2 &={\textstyle \frac{1}{2}} (\partial_x\left(A(U_m)\right)W_{m+1}, W_{m+1})_2\\
&\lesssim \|U_m\|_{H^2(\mathbb{R})}\|W_{m+1}\|_{L^2(\mathbb{R})}^2\\
&\lesssim E_N(U_0)^{\frac{1}{2}}\|W_{m+1}\|_{L^2(\mathbb{R})}^2,\\
-(B(U_m)\mathcal{K}\partial_xW_{m+1}, W_{m+1})_2 &\lesssim \|W_{m+1}\|_{L^2(\mathbb{R})}^2,\\[12pt]
-((A(U_m)-A(U_{m-1}))\partial_xU_m,W_{m+1})_2 &\lesssim \|W_m\|_{L^2(\mathbb{R})}\|U_m\|_{H^2(\mathbb{R})}\|W_{m+1}\|_{L^2(\mathbb{R})}\\
&\lesssim E_N(U_0)^{\frac{1}{2}}\|W_m\|_{L^2(\mathbb{R})}\|W_{m+1}\|_{L^2(\mathbb{R})}\\
\intertext{and}
-((B(U_m)-B(U_{m-1}))\partial_xU_m,W_{m+1})_2 &\lesssim \|W_m\|_{L^2(\mathbb{R})}\|U_m\|_{H^2(\mathbb{R})}\|W_{m+1}\|_{L^2(\mathbb{R})}\\
&\lesssim E_N(U_0)^{\frac{1}{2}}\|W_m\|_{L^2(\mathbb{R})}\|W_{m+1}\|_{L^2(\mathbb{R})},
\end{align*}
where again the dependence on \(\mu\) has been suppressed. We may thus conclude that
\begin{eqnarray*}
{\textstyle \frac{d}{dt}} \|W_{m+1}\|_{L^2(\mathbb{R})} \lesssim_{\mu,E_N(U_0)} \|W_{m+1}\|_{L^2(\mathbb{R})}+\|W_m\|_{L^2(\mathbb{R})}.
\end{eqnarray*}
By Gr{\"o}nwall's inequality,
\begin{eqnarray*}
\max_{0\leq t\leq T} \|W_{m+1}\|_{L^2(\mathbb{R})}
\lesssim_{\mu,E_N(U_0)} T\exp(c_{\mu,E_N(U_0)} T)\max_{0\leq t\leq T}\|W_m\|_{L^2(\mathbb{R})},
\end{eqnarray*}
and we may choose $T \leq T_3$  such that
\begin{eqnarray*}
\|W_{m+1}\|_{C([0,T];L^2(\mathbb{R}))}
\leq {\textstyle \frac{1}{2}}  \|W_m\|_{C([0,T];L^2(\mathbb{R}))}.
\end{eqnarray*}
This immediately implies that \(\{U_m\}_m\) is a Cauchy sequence in the same space, and there thus exists a pair $(\zeta,u)$ such that
\begin{eqnarray}\label{16}
\|\zeta_m-\zeta\|_{C([0,T];L^2(\mathbb{R}))}+\|u_m-u\|_{C([0,T];L^2(\mathbb{R}))}\rightarrow 0,
\end{eqnarray}
as \(m\rightarrow\infty\). In view of  \eqref{16}, similar to the end of the proof of Proposition~\ref{prop:linearised},  one can show that $U$ is a unique classical solution of \eqref{5} in the sense of \(C([0, T]; H^N(\mathbb{R})) \cap C^1([0, T]; H^{N-1}(\mathbb{R}))\). That the solution $U$ depends continuously on the initial data $U_0$ follows from a Bona--Smith type argument \cite{bona1975initial}, where we underline that the constant \(\bar \lambda\) is held fixed in this argument, and thus in the metric given by Theorem~\ref{thm:main}.
\end{proof}

\section*{Appendix}  
With the proof of Theorem~\ref{thm:main} completed in the case when \(\mathbb{X} = \R\), to establish the same result in the periodic case \(\mathbb{X} = \mathbb{T}\) requires only to set up a suitable functional framework, in which estimates from the line may be transferred to estimates on the torus. We provide in this appendix the necessary tools for such a procedure. In particular, we define the appropriate periodic Bessel-potential spaces and prove a periodic version of Lemma~\ref{lemma:mollification}. 
  
Let $\mathcal{E}(\mathbb{T})$ be the Fr\'echet space of all infinitely continuously differentiable (complex-valued) functions on \(\T\) endowed with the topology given by the semi-norms which define \(C^k(\T)\). The topological dual of this space, $\mathcal{E}'(\mathbb{T})$, is the set of distributions on $\mathbb{T}$. For any $f\in \mathcal{E}'(\mathbb{T})$, there exists a  Fourier series representation
\begin{equation*}
 f(x)=\sum_{m\in\mathbb{Z}}\hat{f}(j) \exp(\mathrm{i} m x),
\end{equation*}
where the (generalised) Fourier coeffecients \(\hat f(m)\) are of temperate growth in \(m\), that is, \(|\hat f(m)| \lesssim (1+ m^2)^{R/2}\) for some \(R>0\) and uniformly for all \(m \in \Z\). The Fourier coefficients may be calculated from the action of \(f\) on the functions \(\exp(-\mathrm{i} \cdot) \in \mathcal{E}(\T)\), via
\begin{equation*}
 \hat{f}(m) = f(\exp(-\mathrm{i}m \cdot)) = \int_{\T} f(x) \exp(-\mathrm{i}m x) \, \diff x,
\end{equation*}
where in the general case the integral must be understood only as a formal way of expressing the duality pairing on \(\mathcal{E}' \times \mathcal{E}\). One can then define the scale of Bessel-potential spaces
\begin{equation*}
 H^s(\mathbb{T}) =\Big\{f\in {\mathcal E}'(\mathbb{T}) \colon \sum_{m\in\mathbb{Z}}(1+m^2)^s|\hat{f}(m)|^2<\infty\Big\}, \qquad s \in \R,
\end{equation*}
equipped with the norm induced by the above expression. Note that we make use of negative indices \(s < 0\) in our existence proof, whence we may not define \(H^s(\T)\) just for \(s \geq 0\), in which case one could directly introduce it as a subspace of the standard Lebesgue space $L^2(\mathbb{T}) = H^0(\T)$. The spaces \(H^s(\T)\) has the same duality pairing and embedding properties as the spaces \(H^s(\R)\) (although even better). For more information on periodic function spaces and distributions, we refer the reader to \cite{triebel1983theory}.

The dispersive operator $\mathcal{K}$, too, can be transferred to the periodic setting. For any \(f\in L^{\infty}(\mathbb{T})\), one has
\begin{align*}
\mathcal{K}f(x) &= \int_\R K(x-y) f(y) \, \diff y\\
&=\int_{-\pi}^{\pi}\left(\sum_{k\in \mathbb{Z}}K(x-y+2\pi k)\right)f(y) \, \diff y\\
&= \int_{\T}K_{p}(x-y)f(y) \, \diff y,
\end{align*}
where
\[
K_p(x) =\sum_{m\in\mathbb{Z}}\frac{\tanh(m)}{m} \exp(\mathrm{i}mx).
\]
The function \(K_p\) furthermore belongs to \(L^2(\T)\), and the continuity of the map \(K_p \ast \colon H^s(\T) \to H^{s+1}(\T)\) follows from the decay of the Fourier coefficients of \(K_p\). For these and more facts on the periodic integral kernel \(K_p\), see  \cite{ehrnstrom2016existence}.

Finally, we only need to know that Lemma~\ref{lemma:mollification} holds true also in the periodic setting. For this, note that if $\varrho\in C^{\infty}_c(\mathbb{R})$ satisfies \(\int \varrho \, \diff x = 1\) with \(\varrho = 1\) in a neighbourhood of the origin, then the mollifier $\mathcal{J}_\varepsilon = \frac{1}{\varepsilon} \varrho(\cdot/\varepsilon)$ is given by 
\begin{equation}
  \widehat{\mathcal{J}_\varepsilon f}(m)=\hat{\varrho}(\varepsilon m)\hat{f}(m),
\end{equation}
analogous to the case on the line \cite{taylor1997partial}. Using the rapid decay of \(\hat \varrho\), one obtains the periodic equivalent of Lemma~\ref{lemma:mollification} (note here that we already know that \(\mathcal{J}_\varepsilon \in C(L^\infty(\T),{BUC}(\T))\), since that follows from \(\mathcal{J}_\varepsilon \in C(L^\infty(\R),BUC(\R))\)).

\begin{lemma*} For $k, l \in \Z_0$, one has
\begin{align*}
\|\mathcal{J}_\varepsilon f\|_{H^{k+l}(\mathbb{T})} &\lesssim \varepsilon^{-l}\|f\|_{H^k(\mathbb{T})},\\
\|(\mathcal{J}_\varepsilon-\mathcal{J}_{\varepsilon'})f\|_{H^{k}(\mathbb{T})} &\lesssim |\varepsilon-\varepsilon'| \|\partial_xf\|_{H^{k}(\mathbb{T})}.
\end{align*}
\end{lemma*}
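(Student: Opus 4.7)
The plan is to work entirely in Fourier variables, using the representation $\widehat{\mathcal{J}_\varepsilon f}(m) = \hat\varrho(\varepsilon m)\hat f(m)$ stated just before the lemma, together with the definition $\|f\|_{H^s(\T)}^2 = \sum_{m \in \Z}(1+m^2)^s |\hat f(m)|^2$. Since $\varrho \in C^\infty_c(\R)$, its Fourier transform $\hat\varrho$ lies in the Schwartz class, so in particular $\hat\varrho$ and all of its derivatives are bounded on $\R$; this is the only analytic input I need.

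For the first estimate I would expand
\[
\|\mathcal{J}_\varepsilon f\|_{H^{k+l}(\T)}^2 = \sum_{m \in \Z}(1+m^2)^k \bigl[(1+m^2)^l |\hat\varrho(\varepsilon m)|^2\bigr]\,|\hat f(m)|^2,
\]
and reduce matters to the pointwise bound $(1+m^2)^l|\hat\varrho(\varepsilon m)|^2 \lesssim \varepsilon^{-2l}$, valid uniformly in $m \in \Z$ for $\varepsilon \in (0,1]$. This follows from the elementary inequality $1+m^2 \leq \varepsilon^{-2}(1+(\varepsilon m)^2)$ combined with the Schwartz-type bound $(1+\xi^2)^l|\hat\varrho(\xi)|^2 \leq C_l$ on $\R$.

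For the second estimate I would apply the mean value theorem to the smooth bounded function $\hat\varrho$, picking up
\[
|\hat\varrho(\varepsilon m) - \hat\varrho(\varepsilon' m)| \leq |m|\,|\varepsilon-\varepsilon'|\,\|\hat\varrho'\|_{L^\infty(\R)},
\]
and then inserting this in the Fourier series expansion of the norm to conclude
\[
\|(\mathcal{J}_\varepsilon-\mathcal{J}_{\varepsilon'})f\|_{H^k(\T)}^2 \lesssim |\varepsilon-\varepsilon'|^2\sum_{m \in \Z}(1+m^2)^k m^2 |\hat f(m)|^2 = |\varepsilon-\varepsilon'|^2\|\partial_x f\|_{H^k(\T)}^2,
\]
which is exactly the desired inequality.

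I do not anticipate any genuine obstacle. In contrast with the line case, where the second inequality in Lemma~\ref{lemma:mollification} required a convolution representation and a change of variables in order to extract the derivative, the Fourier characterisation of $H^s(\T)$ renders both bounds transparent and reduces them to elementary calculus applied to the Schwartz function $\hat\varrho$. The normalisation $\hat\varrho(0) = 1$, equivalently $\int\varrho\,\diff x = 1$, plays no role in the estimates themselves; it is only needed to guarantee $\mathcal{J}_\varepsilon f \to f$ as $\varepsilon \searrow 0$.
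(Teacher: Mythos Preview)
Your proposal is correct and follows essentially the same Fourier-side approach as the paper: both arguments reduce to pointwise bounds on $\hat\varrho$ and its differences. The paper's only cosmetic variations are a separate treatment of the mode $m=0$ in the first estimate (using $|\hat\varrho(\xi)|\lesssim|\xi|^{-l}$ rather than your Japanese-bracket bound) and, in the second, an application of the mean value theorem to the exponential inside the defining integral instead of to $\hat\varrho$ directly; either way one lands on $|\hat\varrho(\varepsilon m)-\hat\varrho(\varepsilon' m)|\lesssim |m|\,|\varepsilon-\varepsilon'|$.
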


\begin{proof} Since $\varrho\in C^{\infty}_c(\mathbb{R})\subset \mathcal{S}(\mathbb{R})$, one has $\hat{\varrho}\in \mathcal{S}(\mathbb{R})$ and for any given \(l \geq 0\), the estimate
\begin{equation*}
  |\hat{\varrho}(m)|\lesssim |m|^{-2l}
\end{equation*}
is uniform in \(m \in \Z \setminus \{0\}\). From this and $\widehat{\varrho}(0)= \int \varrho \, \diff x = 1$, we get
\begin{align*}
\|\mathcal{J}_\varepsilon f\|_{H^{k+l}(\mathbb{T})}^2
&=\sum_{m\in\mathbb{Z}}(1+m^2)^{k+l}|\hat{\varrho}(\varepsilon m)|^2|\hat{f}(m)|^2\\
&=\sum_{m\in\mathbb{Z}\setminus\{0\}}(1+m^2)^{k+l}|\hat{\varrho}(\varepsilon m)|^2|\hat{f}(m)|^2+|\hat{f}(0)|^2\\
&\lesssim \varepsilon^{-2l} \sum_{m\in\mathbb{Z}\setminus\{0\}}\frac{(1+m^2)^l}{m^{2l}}(1+m^2)^k|\hat{f}(m)|^2+|\hat{f}(0)|^2\\
&\lesssim \varepsilon^{-2l} \sum_{m\in\mathbb{Z}}(1+m^2)^k|\hat{f}(m)|^2\\
&= \varepsilon^{-2l} \|f\|_{H^k(\mathbb{R})}^2.
\end{align*}
To prove the second inequality, note that by the mean value theorem
\begin{align*}
  |\widehat{\varrho}(\varepsilon m)- \widehat{\varrho}(\varepsilon' m)|
  &= |\int_{\T} \varrho(x) (\exp(- \mathrm{i} \varepsilon m  x)-\exp(- \mathrm{i} \varepsilon' m  x))\, \diff x|\\
  &\lesssim |\varepsilon - \varepsilon'| m
\end{align*}
uniformly for all \(m \in \Z\). Therefore, 
\begin{align*}
\| (\mathcal{J}_\varepsilon - \mathcal{J}_{\varepsilon'}) f\|_{H^{k}(\mathbb{T})}^2
&=\sum_{m\in\mathbb{Z}}(\widehat{\varrho}(\varepsilon m)-\widehat{\varrho}(\varepsilon' m))^2(1+m^2)^{k}|\hat{f}(m)|^2\\
&=\sum_{m\neq 0}\frac{(\widehat{\varrho}(\varepsilon m)-\widehat{\varrho}(\varepsilon' m))^2}{m^2}(1+m^2)^k m^2 |\hat{f}(m)|^2\\
&\lesssim (\varepsilon - \varepsilon')^2 \| \partial_x f\|_{H^k(\mathbb{T})}^2.
\end{align*}
\end{proof}

It is similarly easy to see that the tame product estimate \eqref{eq:tame k} and the interpolation inequalities \eqref{10.6} and \eqref{13}  also hold on $\mathbb{T}$. Based on those inequalities and the above facts, one can follow the exact calculations carried out in Sections \ref{sec:prel}--\ref{proof of main theorem} to finish the proof of Theorem \ref{thm:main} on $\mathbb{T}$.


\end{document}